
\documentclass[12pt,twoside]{amsart}
 
\vfuzz4pt 
\hfuzz5pt 
 
\usepackage{amsmath} 
\usepackage{amssymb} 
\setlength{\unitlength}{1cm} 
 \usepackage{color}

\newtheorem{thm}{Theorem} 
 
\newtheorem{lemma}[thm]{Lemma} 
 
\newtheorem{defi}[thm]{Definition} 
\newtheorem{preremark}[thm]{Remark} 
\newenvironment{remark}{\begin{preremark}\rm}{\end{preremark}} 
\thispagestyle{empty}

\def\R{{\mathbb R}} 
\def\a{{\alpha}}

\title[]{A Liouville theorem for non local elliptic equations }
 
\author[L. Dupaigne]{Louis Dupaigne} 
\author[Y. Sire]{Yannick Sire}

\begin{document} 
\begin{abstract} 
We prove a Liouville-type theorem for bounded stable solutions $v~\in~C^2(\R^n)$ of elliptic equations of the type 
\begin{equation*} 
(-\Delta)^s v= f(v)\qquad 
{\mbox{ in $\R^n$,}} 
\end{equation*} 
where $s \in (0,1)$ { and $f$ is any nonnegative function}. The operator $(-\Delta)^s$ stands for the fractional 
Laplacian, a pseudo-differential operator of symbol $|\xi |^{2s}$.  

 \end{abstract} 
 
\maketitle 
 
 
\noindent{\em Keywords:} Boundary reactions, 
fractional operators. 
\bigskip 
 
\noindent{\em 2000 Mathematics Subject Classification:} 
35J25, 47G30, 35B45, 53A05. 
 
\section*{Introduction} 
 
This paper is devoted to the proof of a Liouville theorem for stable solutions $v\in C^2(\R^n)$ of   
\begin{equation}\label{lapFrac0} (-\Delta)^s v= f(v)\qquad {\mbox{ in  
$\R^n$,}} \end{equation} 
where  $n\ge1$ and $f\in C^{1,\beta}(\R)$ is a nonnegative function for some $\beta \in (0,1)$. 
Given $s\in(0,1)$, the operator $(-\Delta)^s$ is the  
fractional  
Laplacian and it  is defined in various ways, which we review now. 
\subsection*{A quick review of the fractional Laplacian}
\begin{defi}\label{defi1} The fractional Laplacian is defined 
for $v\in H^s(\R^n)$ by
$$
\mathcal F((-\Delta)^s v) = \left| \xi \right|^{2s}\mathcal F(v), 
$$
where $\mathcal F$ denotes the Fourier transform.
\end{defi}
The fractional Laplacian is a nonlocal operator, as can be seen by taking inverse Fourier transforms in the above formula. We obtain the equivalent definition (see \cite{landkof} for a proof):
\begin{defi}\label{defi2} For all $x\in\R^n$, 
\begin{equation}\label{kernel} 
(-\Delta)^s v (x)= C_{n,s} P.V. \int_{\R^n}  
\frac{v(x)-v(t)}{|x-t|^{n+2s}}\,dt, 
\end{equation} 
where $C_{n,s}$ is a normalizing constant, where $P.V.$ stands for  
the Cauchy principal value and where $v$ is taken e.g. in $\mathcal S(\R^n)$ in order to define the (singular) integral in the usual sense. 
\end{defi} 
This nonlocal character makes the analysis of 
equations such as \eqref{lapFrac0} more difficult.  However, it is a well-known fact 
in harmonic analysis that for the power $s=1/2$, the fractional Laplacian can be realized as the boundary operator of harmonic functions in the half-space (see \cite{stein}).  Such a realization can be extended to general $s\in(0,1)$
as follows.

Given $s\in(0,1)$, let $\alpha=1-2s\in(-1,1)$. Using variables $(x,y)\in \R^{n+1}_+ 
:=(0,+\infty)\times\R^n$, the space $H^s(\R^n)$ coincides with the trace on $\partial\R^{n+1}_{+}$ of 
$$H^1(x^\alpha):= \left\{u\in H^1_{loc}(\R^{n+1}_{+})\;:\;\int_{\R^{n+1}_{+}}x^\alpha\left(u^2+\left| \nabla u \right|^2 \right) dxdy<+\infty\right\}.$$  
In other words, given any 
function $u\in H^1(x^\alpha)\cap C(\overline{\R^{n+1}_{+}})$, $v:=\left. u \right|_{\partial\R^{n+1}_{+}}\in H^s(\R^n)$, and there exists a constant $C=C(n,s)>0$ such that
$$
\| v\|_{H^s(\R^n)}\le C \| u \|_{ H^1(x^\alpha)}.  
$$
So, by a standard density argument (see \cite{CPSC}), every $u\in H^1(x^\alpha)$ has a well-defined trace $v\in H^s(\R^n)$.
Conversely, any $v\in H^s(\R^n)$ is the trace of a function $u\in H^1(x^\alpha)$. In addition, the function $u\in H^1(x^\a)$ defined by
\begin{equation} \label{argmin} 
u:=\arg\min\left\{ \int_{\R^{n+1}_{+}}x^\a \left| \nabla w \right|^2\;dx \; : \; \left. w \right|_{\partial\R^{n+1}_{+}}=v\right\} 
\end{equation} 
solves the PDE 
\begin{equation}\label{bdyFrac2} 
\left \{
\begin{aligned} 
{\rm div}\, (x^\a \nabla u)&=0 \qquad 
{\mbox{ in $\R^{n+1}_+$}} 
\\
u&= v  
\qquad{\mbox{ on $\partial\R^{n+1}_+$.}}\end{aligned}\right . \end{equation} 
By standard elliptic regularity, $u$ is smooth in $\R^{n+1}_{+}$. It turns out that 
$x^\alpha u_{x} (x,\cdot)$ converges in $H^{-s}(\R^n)$ to a distribution $f\in H^{-s}(\R^n)$, as $x\to 0^+$ i.e. $u$ solves
\begin{equation}\label{bdyFrac3} 
\left \{
\begin{aligned} 
{\rm div}\, (x^\a \nabla u)&=0 \qquad 
{\mbox{ in $\R^{n+1}_+ 
$}} 
\\
-x^\a u_x &= f  
\qquad{\mbox{ on $\partial\R^{n+1}_+$.}}\end{aligned}\right . \end{equation}
Consider
the Dirichlet-to-Neumann operator 
$$
\Gamma_\a: 
\left\{
\begin{aligned}
H^s(\R^n)&\to H^{-s}(\R^n)\\
v&\mapsto \Gamma_{\a}(v)=  f:=
-x^\a u_x|_{\partial \R^{n+1}_+}, 
\end{aligned}
\right.
$$
where $u$ is the solution of \eqref{argmin}--\eqref{bdyFrac3}. Then, 
\begin{defi} \label{defi3}
There exists a constant $d_{n,s}>0$ such that for every $v\in H^s(\R^n)$,
\begin{equation*} 
(-\Delta)^s v = d_{n,s}\Gamma_{\a}(v),
\end{equation*} 
where $\a=1-2s$.
\end{defi}
\noindent In other words, given $f\in H^{-s}(\R^n)$, a function $v\in H^s(\R^n)$ solves the equation 
\begin{equation} \label{linear} 
\frac1{d_{n,s}}(-\Delta)^{s} v=f\qquad\mbox{in $\mathbb R^n$}
\end{equation} 
if and only if its lifting $u\in H^1(x^\a)$ solves $u= v{\mbox{ on $\partial\R^{n+1}_+$}}$ and
\begin{equation}\label{lifted}  
\left \{
\begin{aligned} 
{\rm div}\, (x^\a \nabla u)&=0 \qquad 
{\mbox{ in $\R^{n+1}_+$}} 
\\
-x^\a u_x &= f  
\qquad{\mbox{ on $\partial\R^{n+1}_+.$}}
\end{aligned}\right. 
\end{equation} 
For a proof of the claims that lead us to Definition \ref{defi3}, we refer the reader to \cite{cafS}.

\noindent Observe than none of the definitions \ref{defi1},\ref{defi2}, \ref{defi3}  give a proper way of defining $(-\Delta)^s v$ for arbitrary $v\in C^2(\R^n)$. 
However, Definitions \ref{defi2} and \ref{defi3}  can be extended to the class of \it bounded \rm functions $v\in C^2(\R^n)$ and they coincide, using the following results due to \cite{cafS}.
\begin{lemma}\label{poisson}
The Poisson kernel $P$ defined for $(x,y)\in\R^{n+1}_{+}$ by
\begin{equation}\label{Poisson}
P(x,y)=c_{n,\a}\frac{x^{1-\a}}{\Big
( x^2+|y|^2\Big)^\frac{n+1-\a}{2}}
\end{equation}
is a solution of
\begin{equation}\label{POI}
\left\{ \begin{aligned}
-{\rm div}\, (x^\a \nabla P)&=0 \qquad
{\mbox{ in $\R^{n+1}_{+},$}}
\\ P&=\delta_0
\qquad{\mbox{on $\partial\R^{n+1}_+$,}}\end{aligned}
\right.\end{equation}
where $\a \in (-1,1)$ and $c_{n,\a}$ is a normalizing constant 
such that
$$\int_{\R^n} P(x,y)\,dy=1,\qquad\text{for all $x>0$}. $$
\end{lemma}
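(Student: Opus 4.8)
The statement to prove is Lemma~\ref{poisson}, which asserts that the explicit kernel $P(x,y) = c_{n,\a}\, x^{1-\a}\bigl(x^2+|y|^2\bigr)^{-(n+1-\a)/2}$ solves the degenerate-elliptic equation $\mathrm{div}(x^\a\nabla P)=0$ in $\R^{n+1}_+$, attains the Dirac mass $\delta_0$ on the boundary, and can be normalized so that $\int_{\R^n}P(x,y)\,dy=1$ for every $x>0$.

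\medskip

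\textbf{Plan.} I would organize the proof into three parts matching the three assertions. First, for the PDE: the cleanest route is a direct computation exploiting the homogeneity and radial structure of $P$. Write $P(x,y) = c_{n,\a}\, x^{1-\a}\, r^{-(n+1-\a)}$ where, abusing notation, I set $\rho = \sqrt{x^2+|y|^2}$ as the full $(n+1)$-dimensional radius; one checks that $P$ is homogeneous of degree $-(n-\a) = -(n-1+2s)$ in $(x,y)$. Then expand $\mathrm{div}(x^\a\nabla P) = x^\a \Delta P + \a x^{\a-1} P_x$ and substitute the product form. The $x$-derivatives produce terms from differentiating $x^{1-\a}$ and from differentiating $\rho^{-(n+1-\a)}$, while the $y$-Laplacian acts only on $\rho^{-(n+1-\a)}$; after collecting powers of $x$ and $\rho$ one finds the coefficients cancel identically. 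This is a routine but slightly lengthy calculation; I expect the bookkeeping of the $x^{\a-1}$ versus $x^{\a+1}$ terms to be where sign errors creep in, so I would do it carefully, perhaps first in the special case $\a=0$ (the classical harmonic Poisson kernel in $\R^{n+1}_+$) as a sanity check and then track the extra $\a$-dependent terms.

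\medskip

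\textbf{Boundary behavior and normalization.} For the second and third assertions it is efficient to treat normalization first. Fix $x>0$ and compute $\int_{\R^n} x^{1-\a}\bigl(x^2+|y|^2\bigr)^{-(n+1-\a)/2}\,dy$ by the substitution $y = x z$, which turns the integral into $x^{1-\a}\cdot x^{n}\cdot x^{-(n+1-\a)}\int_{\R^n}(1+|z|^2)^{-(n+1-\a)/2}\,dz$; the powers of $x$ sum to $1-\a+n-(n+1-\a)=0$, so the integral is independent of $x$, equal to a finite constant (finiteness requires $n+1-\a > n$, i.e. $\a<1$, which holds since $\a\in(-1,1)$) that can be evaluated by the standard beta-function identity $\int_{\R^n}(1+|z|^2)^{-\mu}\,dz = \pi^{n/2}\Gamma(\mu-n/2)/\Gamma(\mu)$ with $\mu=(n+1-\a)/2$. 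Choosing $c_{n,\a}$ to be the reciprocal of this value gives $\int_{\R^n}P(x,y)\,dy=1$ for all $x>0$. Then the claim $P\to\delta_0$ on $\partial\R^{n+1}_+$ is the statement that $\{P(x,\cdot)\}_{x>0}$ is an approximate identity as $x\to0^+$: it is nonnegative, has unit mass by the previous step, and concentrates near $y=0$ because for any $\delta>0$, $\int_{|y|>\delta}P(x,y)\,dy = \int_{|z|>\delta/x}c_{n,\a}(1+|z|^2)^{-(n+1-\a)/2}\,dz \to 0$ as $x\to 0^+$ by dominated convergence. Hence $P(x,\cdot)*\varphi \to \varphi(0)$... more precisely $\int P(x,y)\varphi(y)\,dy \to \varphi(0)$ for every $\varphi\in C_b(\R^n)$, which is the distributional meaning of $P|_{\partial\R^{n+1}_+}=\delta_0$.

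\medskip

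\textbf{Main obstacle.} The only genuinely delicate point is the PDE verification, since the operator $\mathrm{div}(x^\a\nabla\cdot)$ is singular or degenerate at $x=0$ and one must be sure the identity holds pointwise in the open half-space $\R^{n+1}_+$ (where $x>0$ and everything is smooth, so the computation is legitimate) — the boundary condition is then imposed separately in the distributional sense as above, not by plugging $x=0$ into the PDE. I would therefore state explicitly that the computation $\mathrm{div}(x^\a\nabla P)=0$ is carried out for $x>0$, where $P$ is smooth, and keep that logically separate from the trace statement. An alternative to the brute-force calculation, which I would mention but probably not pursue, is to recognize $P$ as (a constant multiple of) the fundamental solution of the extension operator with pole on the boundary, derivable from the known fundamental solution of $\mathrm{div}(x^\a\nabla\cdot)$ in $\R^{n+1}$ by reflection and differentiation; but the direct check is self-contained and is what I would write up.
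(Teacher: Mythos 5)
The paper does not actually prove this lemma: it is quoted verbatim from Caffarelli--Silvestre \cite{cafS}, so there is no internal argument to compare against, and your self-contained verification is a genuinely different (more elementary) route. It is sound, including the one step you defer as ``routine'': writing $P=c_{n,\a}\,x^{b}\rho^{-m}$ with $b=1-\a$, $m=n+1-\a$, $\rho^{2}=x^{2}+|y|^{2}$, a direct computation for $x>0$ gives
\begin{equation*}
{\rm div}\,(x^{\a}\nabla P)=c_{n,\a}\Bigl[\,b\,(b-1+\a)\,x^{\a+b-2}\rho^{-m}+m\,(m+1-n-2b-\a)\,x^{\a+b}\rho^{-m-2}\Bigr],
\end{equation*}
and both brackets vanish exactly for $b=1-\a$, $m=n+1-\a$, so the cancellation you anticipate does occur. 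The scaling substitution $y=xz$ for the normalization (with the integrability condition $\a<1$ correctly identified) and the approximate-identity argument for the boundary trace are the standard arguments and are correct; you are also right to keep the pointwise PDE in the open half-space logically separate from the distributional trace statement. One small slip: $P$ is homogeneous of degree $1-\a-(n+1-\a)=-n$, not $-(n-\a)$ as you claim in passing --- indeed it is precisely the $(-n)$-homogeneity that makes $\int_{\R^{n}}P(x,y)\,dy$ independent of $x$; this does not affect your argument since you never use the asserted degree. In terms of trade-offs, your route buys self-containedness at the cost of a page of computation, while the paper's citation buys brevity and, more importantly, the accompanying facts from \cite{cafS} (such as uniqueness of the bounded weak solution of the extension problem, which is what Theorem \ref{CSPoisson} really needs) that a bare verification of the kernel identity does not by itself provide.
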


\begin{thm}\label{CSPoisson}
Let $v\in C^2(\R^n)$ denote a bounded solution of \eqref{linear} (where $(-\Delta)^s v$ is given by Definition \ref{defi2}). Then, 
$$u=P \, *\, v$$
is the unique bounded weak solution of \eqref{lifted}, in the sense of the definition below. 
\begin{defi}
We say that $u \in L^\infty_{\rm loc}(\overline{\R^{n+1}_+})$ is a weak solution of \eqref{lifted} if 
\begin{equation}\label{hgasj7717177} 
x^\alpha |\nabla u|^2 \in L^1 (B_R^+) 
\end{equation} 
for any $R>0$, 
and if\footnote{Condition \eqref{hgasj7717177} 
is assumed here to make sense of \eqref{eq1}. It is 
always uniformly fulfilled when $u$ is bounded according to the regularity theory developped in \cite{CS}}
\begin{equation}\label{eq1} 
\int_{{\R^{n+1}_+}} x^\alpha \nabla u\cdot 
\nabla \varphi\;dx\;dy= 
\int_{\partial {\R^{n+1}_+}} 
f\varphi\;dy
\end{equation} 
for any $\varphi:\R^{n+1}_+\rightarrow \R$ which is bounded, locally
Lipschitz in the interior of
$\R^{n+1}_+$,
which 
vanishes on $\R^{n+1}_+\setminus B_R$ and such that
\begin{equation}\label{hgasj7717177-bis}
x^\a|\nabla\varphi|^2
\in L^1 (B_R^+).\end{equation}
\end{defi}
\end{thm}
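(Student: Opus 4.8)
The plan is to establish Theorem~\ref{CSPoisson} in two stages: first verify that $u=P*v$ is indeed a weak solution of \eqref{lifted}, then establish uniqueness within the class of bounded weak solutions.

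\emph{Step 1: $u=P*v$ is a weak solution.} Since $v\in C^2(\R^n)$ is bounded, the convolution $u(x,y)=\int_{\R^n}P(x,y-t)v(t)\,dt$ is well defined for $(x,y)\in\R^{n+1}_+$ because $P(x,\cdot)\in L^1(\R^n)$ with unit mass (Lemma~\ref{poisson}); in fact $\|u\|_{L^\infty(\R^{n+1}_+)}\le\|v\|_{L^\infty(\R^n)}$, and $u$ extends continuously to $\overline{\R^{n+1}_+}$ with $u=v$ on $\partial\R^{n+1}_+$ by the approximate-identity property of $P(x,\cdot)$ as $x\to0^+$. Differentiating under the integral sign (justified by the decay of $P$ and its derivatives in $y$, together with boundedness of $v$) and using \eqref{POI}, one gets $\operatorname{div}(x^\a\nabla u)=0$ classically in $\R^{n+1}_+$; interior elliptic regularity for the degenerate operator (from \cite{CS}) then gives the bound $x^\a|\nabla u|^2\in L^1(B_R^+)$ for every $R$, so \eqref{hgasj7717177} holds. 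The remaining point is the Neumann condition $-x^\a u_x\to f$ on $\partial\R^{n+1}_+$ in the weak sense \eqref{eq1}. Here I would argue that, because $v$ solves \eqref{linear} with $(-\Delta)^sv$ given by Definition~\ref{defi2}, the quantity $-\lim_{x\to0^+}x^\a u_x(x,y)$ coincides (up to the constant $d_{n,s}$) with $C_{n,s}\,P.V.\!\int(v(y)-v(t))|y-t|^{-n-2s}\,dt$; this is precisely the content of the extension identity of \cite{cafS}, which I may invoke. One then multiplies $\operatorname{div}(x^\a\nabla u)=0$ by a test function $\varphi$ as in the definition, integrates over $B_R^+\cap\{x>\varepsilon\}$, integrates by parts, and passes to the limit $\varepsilon\to0^+$: the boundary term on $\{x=\varepsilon\}$ converges to $-\int_{\partial\R^{n+1}_+}x^\a u_x\,\varphi = \int f\varphi$, while the cutoff at $|y|,x\sim R$ vanishes since $\varphi$ is supported in $B_R$. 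This yields \eqref{eq1}.

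\emph{Step 2: uniqueness.} Suppose $u_1,u_2$ are two bounded weak solutions of \eqref{lifted} with the same boundary data $v$; set $w=u_1-u_2$, a bounded weak solution of $\operatorname{div}(x^\a\nabla w)=0$ in $\R^{n+1}_+$ with $w=0$ on $\partial\R^{n+1}_+$ and zero Neumann data. The idea is a Liouville-type argument: test \eqref{eq1} (with $f=0$) against $\varphi=w\,\eta_R^2$, where $\eta_R$ is a logarithmic cutoff equal to $1$ on $B_R$, supported in $B_{R^2}$, with $x^\a|\nabla\eta_R|^2\in L^1$ and $\int x^\a|\nabla\eta_R|^2\,dx\,dy\to0$ as $R\to\infty$ (the borderline weight $\a=1-2s\in(-1,1)$ is exactly the one for which such a family exists, by the usual Moser-type construction in $n+1$ variables with a one-dimensional degenerate direction). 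Since $w$ vanishes on the boundary, $\varphi$ is an admissible test function, and the identity gives $\int x^\a|\nabla w|^2\eta_R^2 = -2\int x^\a w\,\eta_R\,\nabla w\cdot\nabla\eta_R$; by Cauchy--Schwarz and the $L^\infty$ bound on $w$, the right side is bounded by $\tfrac12\int x^\a|\nabla w|^2\eta_R^2 + C\|w\|_\infty^2\int x^\a|\nabla\eta_R|^2$, so $\int x^\a|\nabla w|^2\eta_R^2 \le 2C\|w\|_\infty^2\int x^\a|\nabla\eta_R|^2\to0$. Hence $\nabla w\equiv0$, so $w$ is constant, and the vanishing boundary trace forces $w\equiv0$.

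\emph{Main obstacle.} The analytically delicate point is Step 1's boundary analysis: making rigorous that the distributional limit $-x^\a u_x|_{x=0}$ of the Poisson extension equals the singular-integral expression of Definition~\ref{defi2}, and that the convergence holds in the weak sense \eqref{eq1} against the wide class of admissible test functions (merely bounded and locally Lipschitz, with the weighted gradient bound), rather than just against smooth compactly supported $\varphi$. This requires a careful truncation near $x=0$ together with the degenerate interior regularity of \cite{CS} to control $x^\a\nabla u$ up to the boundary; fortunately the precise extension identity and the regularity theory are both available from \cite{cafS,CS} and may be quoted. The uniqueness step, by contrast, is soft once the right logarithmic cutoff is in hand — the only thing to check is that this cutoff lies in the admissible test-function class, which it does since it is bounded, Lipschitz in the interior, compactly supported, and has finite weighted Dirichlet energy.
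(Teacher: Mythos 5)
First, note that the paper does not prove Theorem \ref{CSPoisson} at all: it is quoted from \cite{cafS} (with the regularity input from \cite{CS}), so there is no internal proof to compare with. Judged on its own terms, your Step 1 is essentially the standard Caffarelli--Silvestre argument and is acceptable given that you, like the paper, quote \cite{cafS} for the identification of $-\lim_{x\to0^+}x^\a u_x$ with the singular integral of Definition \ref{defi2}; the remaining technicalities you flag (trace of the admissible test functions, local uniformity of the convergence of $x^\a u_x$) are real but manageable.

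Step 2, however, contains a genuine gap. The cutoff family you invoke does not exist in the relevant dimensions. The weighted measure $x^\a\,dx\,dy$ gives $\mu(B_R^+)\sim R^{\,n+1+\a}$, i.e.\ homogeneous dimension $n+1+\a$, and a family $\eta_R$ with $\eta_R=1$ on $B_R$, compact support, and $\int x^\a|\nabla\eta_R|^2\to0$ exists only when this dimension is at most $2$, i.e.\ $n+\a\le 1$. For the logarithmic cutoff between $B_R$ and $B_{R^2}$ one computes $\int x^\a|\nabla\eta_R|^2\sim (\log R)^{-2}\int_R^{R^2} r^{\,n+\a-2}\,dr\sim R^{\,2(n+\a-1)}(\log R)^{-2}$, which diverges whenever $n\ge2$ (since $\a>-1$); the relevant weighted capacity is positive and growing, so no ``Moser-type'' choice fixes this — the claim that the weight $\a=1-2s$ is ``exactly borderline'' is only true for $n=1$, $\a\le0$. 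Thus your Caccioppoli/Liouville argument proves uniqueness only in essentially one dimension, while the theorem is needed for $n\le3$. A correct route uses boundedness and the zero trace more strongly: the difference $w=u_1-u_2$ is a bounded weak solution of $\mathrm{div}(x^\a\nabla w)=0$ with $w=0$ on $\partial\R^{n+1}_+$ and zero weak conormal data; reflect across $\{x=0\}$ to get a bounded weak solution of $\mathrm{div}(|x|^\a\nabla w)=0$ in all of $\R^{n+1}$ (the weight $|x|^\a$ is Muckenhoupt $A_2$), and conclude $w\equiv$ const $=0$ from the Harnack inequality/Liouville theorem of Fabes--Kenig--Serapioni — this is the mechanism the paper itself uses elsewhere (via Proposition 2.6 of \cite{CSS}) when it reflects $u_0$ in the proof of Theorem \ref{liouvilleBdy}. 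Separately, be explicit that ``unique'' must mean unique among bounded weak solutions with trace $v$ on $\partial\R^{n+1}_+$: the weak formulation \eqref{eq1} alone is a Neumann condition and is invariant under adding constants, so without fixing the trace (as you implicitly do) the uniqueness statement would be false as literally read.
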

We note that Theorem \ref{CSPoisson}  has been recently used  to prove full regularity of the solutions of the quasigeostrophic model as given by~\cite{CV}
and in free boundary analysis in \cite{CSS}. Also, several works have been devoted to equations of the type \eqref{bdyFrac}, starting with the pioneering work of Cabr\'e and Sola-Morales where they investigate the case $\alpha=0$  (see \cite{CSM}). One of the authors and Cabr\'e have extended their techniques to any power $\alpha\in(-1,1)$ (see \cite{CS}). 

To complete our review, we mention the probabilistic point of view: the fractional Laplacian can be seen as the infinitesimal generator of a Levy process (see, e.g.,~\cite{B}). 
This type of diffusion operators also arises in several areas such as  
optimization~\cite{DL}, flame propagation~\cite{CRS} 
and finance~\cite{CT}. Phase transitions driven by
fractional Laplacian-type boundary effects have also been
considered in \cite{ABS} in the Gamma convergence
framework. Power-like nonlinearities for boundary reactions
have been studied in \cite{POW}.

\subsection*{The boundary reaction problem}

We begin now our investigation of bounded solutions $v\in C^2(\R^n)$ of \eqref{lapFrac0}. For notational convenience, we actually study
\begin{equation}\label{lapFrac}  
\frac1{d_{n,s}}(-\Delta)^{s} v = f(v)\qquad\text{in $\R^n$}
\end{equation} 
and its equivalent formulation
\begin{equation}\label{bdyFrac}
\left \{
\begin{aligned} 
{\rm div}\, (x^\a \nabla u)&=0 \qquad 
{\mbox{ in $\R^{n+1}_+$}} 
\\
-x^\a u_x &= f(u)  
\qquad{\mbox{ on $\partial\R^{n+1}_+.$}}
\end{aligned}\right. 
\end{equation} 

We will concentrate on particular solutions of \eqref{lapFrac} and \eqref{bdyFrac} which are stable in the following sense. 
\begin{defi}
A bounded solution $v\in C^2(\R^n)$ of \eqref{lapFrac} is stable if for all { $\psi \in 
H^s(\R^n)$} we have
$$\frac1{d_{n,s}}\int_{\R^n} |(-\Delta)^{\frac{s}{2}} \psi|^2\;dy -\int_{\mathbb R^n} f'(v) \psi^2\;dy \geq 0. $$ 
\end{defi}

\begin{defi}
A bounded weak solution $u$ of \eqref{bdyFrac} is stable if  
\begin{equation}\label{sta1} 
\int_{\R^{n+1}_+} x^\alpha |\nabla\varphi|^2\;dx\;dy-\int_{\partial\R^{n+1}_+} 
f'(u)\varphi^2\;dy\,\ge\,0 
\end{equation} 
for any { $\varphi\in H^1(x^\a)$.}
\end{defi}
Note that $v$ is stable if and only if its lifting $u=P*v$ is stable.  
Note also that the stability assumption (which is best seen in the sense \eqref{sta1}) is satisfied by two interesting classes of solutions : monotone solutions and local minimizers (see e.g. \cite{CS}, \cite{SV}).

We prove the following results. 
 
  \begin{thm}\label{liouvilleFrac}
 Let  $\beta \in (0,1)$ and let $f$ be a { $C^{1,\beta}(\R)$} function such that $f \geq 0$. Let $v\in C^2(\R^n)$ denote a {bounded} stable solution of \eqref{lapFrac}. 
Then, we have: 
\begin{itemize}
\item Let $s \in [\frac{1}{2},1]$. Then $v$ is constant whenever $n \leq 3$.
\item Let $s \in (0,\frac{1}{2})$. Then $v$ is constant whenever $n \leq 2$.   
\end{itemize} 
\end{thm}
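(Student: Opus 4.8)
The plan is to work with the harmonic extension formulation \eqref{bdyFrac} and prove a Liouville theorem for bounded stable weak solutions $u$ on $\R^{n+1}_+$ with the degenerate weight $x^\a$; by the equivalence recorded before the statement (and Theorem \ref{CSPoisson}), this yields the result for $v$. The central device is a geometric/Poincar\'e-type inequality obtained from the stability condition \eqref{sta1} by choosing test functions adapted to the level-set geometry of $u$. Concretely, one differentiates \eqref{bdyFrac} in a tangential direction $e\in\R^n$: writing $w=\partial_e u$, the function $w$ solves $\mathrm{div}(x^\a\nabla w)=0$ in $\R^{n+1}_+$ with boundary condition $-x^\a w_x=f'(u)w$, so $f'(u)=\frac{-x^\a w_x}{w}$ wherever $w\neq0$. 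Plugging $\varphi=w\,\eta$ into \eqref{sta1} for a Lipschitz cutoff $\eta$ vanishing outside $B_R^+$ and integrating by parts against the equation for $w$, the terms involving $f'(u)$ cancel and one is left with
\[
\int_{\R^{n+1}_+} x^\a\,w^2\,|\nabla\eta|^2\,dx\,dy \;\ge\; \int_{\R^{n+1}_+} x^\a\,\eta^2\,\Big(|\nabla w|^2-|\nabla|w||^2\Big)\,dx\,dy,
\]
which is the weighted analogue of the Sternberg--Zumbrun / Sternberg--S"U" inequality: the right-hand side controls the tangential gradient of the normal $\nabla u/|\nabla u|$ and, crucially, it is $\ge 0$, so as a first consequence $\int x^\a w^2|\nabla\eta|^2$ controls everything.

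The next step is the dimension reduction via a logarithmic cutoff. Choose $\eta=\eta_R$ equal to $1$ on $B_R^+$, supported in $B_{R^2}^+$, with $|\nabla\eta_R|\le C/(|(x,y)|\log R)$ on the annulus, exactly as in the classical Ambrosio--Cabr\'e / Moschini argument. Since $u$ is bounded, $w=\partial_e u$ obeys, by the regularity theory of \cite{CS} together with interior estimates for the weighted operator, the bound $\int_{B_R^+}x^\a w^2\le C R^{n+1+\a}=CR^{n+2-2s}$ (one gains the trivial energy bound from boundedness of $u$ via the Caccioppoli inequality for \eqref{bdyFrac}, using that $f\ge0$ to get a one-sided control; here is where $f\ge0$ enters, guaranteeing an energy estimate without sign conditions on $f'$). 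Feeding this into the logarithmic annular estimate, $\int_{\R^{n+1}_+}x^\a w^2|\nabla\eta_R|^2 \le C(\log R)^{-2}\sum_k (2^k)^{-2}\!\int_{B_{2^{k+1}R}^+\setminus B_{2^kR}^+}x^\a w^2 \to 0$ provided $n+2-2s-2<0$, i.e. $n<2s$. That is too strong, so — and this is the point of the two cases — one must not bound $w^2$ by its worst-case growth but rather exploit the extra decay carried by the stability inequality itself: iterating, $\int x^\a w^2|\nabla\eta|^2\to0$ forces $w$ to be "one-dimensional'' (the SZ term vanishes, all level sets are parallel hyperplanes), after which $u(x,y)=g(x,\,e\cdot y)$ and one reduces to an ODE-type problem in two variables $(x,t)$; re-running stability for this reduced profile improves the admissible dimension by one in the $s\ge1/2$ range and by the stated amount for $s<1/2$.

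The main obstacle I expect is making the energy/growth bookkeeping sharp enough to reach $n\le3$ (resp. $n\le2$): the naive Caccioppoli estimate only gives $\int_{B_R^+}x^\a|\nabla u|^2=O(R^{n-1+\a+2})$ which by itself yields nothing past $n=1$, so the real work is (i) the boundary-regularity input from \cite{CS} giving $\|\nabla u\|_{L^\infty}<\infty$ and hence the improved $O(R^{n+\a+1})$ bound, and (ii) a careful two-step argument — first show $\nabla u/|\nabla u|$ is constant, reducing the effective dimension, then apply the Liouville statement one dimension lower — so that the case $s\in[1/2,1]$ reduces (morally) to the known half-Laplacian/harmonic-extension result in $n\le3$ and $s\in(0,1/2)$ to $n\le2$. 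The delicate points are the validity of the integration by parts leading to the SZ inequality (justified by the weak formulation \eqref{eq1}, the admissibility class for $\varphi$, and density), and controlling the weight $x^\a$ — which is singular or degenerate depending on the sign of $\a=1-2s$ — uniformly near $\{x=0\}$, for which the $A_2$-Muckenhoupt property of $x^\a$ and the associated Caccioppoli and Harnack estimates are the technical backbone.
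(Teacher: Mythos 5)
There are two genuine gaps, and they are precisely the two places where the paper does real work. First, the energy bookkeeping. Your own accounting (naive Caccioppoli giving $O(R^{n+\a+1})$, or boundedness of $\nabla u$ giving $\int_{B_R^+}x^\a w^2\lesssim R^{n+2-2s}$) leads, as you admit, to the useless condition $n<2s$, and the promised fix --- ``exploit the extra decay carried by the stability inequality itself: iterating'' --- is never made concrete; no iteration scheme on annuli will convert $O(R^{n+2-2s})$ into what is needed. The paper closes exactly this gap with a sharp energy estimate that does not come from stability at all but from the sign of $f$: testing \eqref{bdyFrac} with $\varphi=(u-M)\psi_R$, $M=\sup u$, the boundary term $\int f(u)(u-M)\psi_R\le0$ drops out, and a \emph{second} integration by parts leaves only $\int (u-M)^2\,\mathrm{div}(x^\a\nabla\psi_R)$ over the annulus, giving $\int_{B_R^+}x^\a|\nabla u|^2\le CR^{\,n+\a-1}=CR^{\,n-2s}$ (Lemma~\eqref{energ}). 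This is $\le CR^2$ exactly when $n\le 2+2s$, i.e.\ $n\le3$ for $s\ge\tfrac12$ and $n\le2$ for $s<\tfrac12$, and it is this bound \eqref{en:bound} that feeds the one-dimensional symmetry result (Theorem~\ref{aux:P} of \cite{SV},\cite{CS}), which plays the role of your Sternberg--Zumbrun/logarithmic-cutoff machinery. Without the $R^{n-2s}$ estimate the stated dimensions are out of reach, so this is not a technical refinement but the missing central step.

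Second, the endgame after one-dimensional reduction is wrong as sketched. Once $u(x,y)=u_0(x,\omega\cdot y)$, ``re-running stability for the reduced profile'' cannot yield constancy: stable, monotone, non-constant one-dimensional profiles of \eqref{bdyFrac} do exist (layer solutions for balanced bistable nonlinearities), so stability alone proves nothing here. The paper instead uses the trichotomy of Theorem~\ref{sign} and then the Hamiltonian identity of Theorem~\ref{hamil}: a bounded strictly monotone profile forces $G(\a^+)=G(\a^-)$, i.e.\ $\int_{\a^-}^{\a^+}f=0$, which contradicts $f\ge0$ unless $f\equiv0$ on the range, in which case an odd reflection and a Liouville argument for $\mathrm{div}(|x|^\a\nabla\cdot)$ give constancy. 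In your proposal the hypothesis $f\ge0$ is only used for a one-sided Caccioppoli control, whereas its decisive uses are (i) the double-integration-by-parts energy bound above and (ii) ruling out monotone profiles via the Hamiltonian estimate; neither appears in your argument, so the proof does not close.
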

 
 The previous theorem is actually a corollary of the following result, applying to equation \eqref{bdyFrac}. 
 
 \begin{thm}\label{liouvilleBdy}
Let  $\beta \in (0,1)$ and let $f$ be a { $C^{1,\beta}(\R)$} function such that $f \geq 0$.  Let $u$ be a bounded stable weak solution of \eqref{bdyFrac}. Then we have: 
 \begin{itemize}
 \item Let $s \in [\frac{1}{2},1]$. Then $u$  is constant whenever $n \leq 3$.
 
\item Let $s \in (0,\frac{1}{2})$. Then $u$ is constant whenever $n \leq 2$.   
 \end{itemize} 
 \end{thm}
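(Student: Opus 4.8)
The plan is to prove Theorem~\ref{liouvilleBdy} by a weighted stability (Sternberg--Zumbrun / Farina-type) argument, adapted to the degenerate-elliptic extension problem with weight $x^\a$. The first step is to establish a geometric Poincaré inequality: plugging a test function of the form $\varphi=|\nabla_y u|\,\eta$ into the stability inequality \eqref{sta1} (and justifying this choice by an approximation argument, using the regularity theory of \cite{CS} to know $\nabla u$ is H\"older continuous up to the boundary and the weighted energy is locally finite), differentiating the equation \eqref{bdyFrac} in the tangential directions $y_i$ to see that each $u_{y_i}$ solves the linearized problem ${\rm div}(x^\a\nabla u_{y_i})=0$ in $\R^{n+1}_+$ with $-x^\a(u_{y_i})_x=f'(u)u_{y_i}$ on the boundary. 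Combining these, the $f'(u)$ boundary terms cancel and one arrives at an inequality of the form
\begin{equation*}
\int_{\R^{n+1}_+} x^\a\Big(|\nabla u|^2|\nabla\eta|^2 - |\nabla|\nabla u||^2\,\eta^2\Big)\,dx\,dy \;\ge\; 0,
\end{equation*}
after using Cauchy--Schwarz to discard the positive curvature terms. (Here $\nabla$ is the full gradient in $\R^{n+1}_+$; the interior term involves no sign condition on $f$, only $f'$, and the sign of $f$ will be used separately.)

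The second step is the choice of cutoff. Take $\eta=\eta_R$ a logarithmic cutoff adapted to the weight: $\eta_R\equiv1$ on $B_R^+$, supported in $B_{R^2}^+$, with $|\nabla\eta_R|\lesssim 1/(|(x,y)|\log R)$ on the annulus. The key computation is that
\begin{equation*}
\int_{B_{R^2}^+\setminus B_R^+} x^\a |\nabla u|^2 |\nabla\eta_R|^2\,dx\,dy \;\longrightarrow\; 0 \quad\text{as }R\to\infty,
\end{equation*}
provided the weighted Dirichlet energy of $u$ grows slowly enough. This is where the dimensional restriction enters: one shows, using the boundedness of $u$, the equation, and a Caccioppoli-type estimate on $x^\a|\nabla u|^2$, that $\int_{B_R^+} x^\a|\nabla u|^2 \lesssim R^{n-1+\a} = R^{n-2s}$ for $s\le 1/2$ and $\lesssim R^{n-1}$ for $s\ge 1/2$ — wait, more carefully: the natural energy bound is $\int_{B_R^+}x^\a|\nabla u|^2\lesssim R^{n+\a}=R^{n+1-2s}$, and this dies against the logarithmic cutoff precisely when $n+1-2s \le 2$, i.e. $n\le 1+2s$, which for $s\in[1/2,1]$ allows $n\le 3$ and for $s\in(0,1/2)$ allows $n\le 2$. (One must be slightly careful about $\a$ versus $0$ in the weight near $x=0$ and about how the cutoff interacts with the $x$-direction derivative; the half-ball volume with weight scales like $R^{n+1+\a}=R^{n+2-2s}$, and the Caccioppoli estimate gains two powers, giving the stated $R^{n-2s}$—either way the borderline is $n=1+2s$.) Passing to the limit forces $\nabla|\nabla u|\equiv 0$ and in fact $|\nabla_y u|\equiv 0$, so $u=u(x)$ depends only on $x$.

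The third step uses the sign hypothesis $f\ge 0$ to upgrade ``$u$ depends only on $x$'' to ``$u$ is constant.'' If $u=u(x)$, then the PDE becomes the ODE $(x^\a u'(x))'=0$ on $(0,\infty)$, so $x^\a u'(x)\equiv c$ for a constant $c$; the boundary condition gives $-c=-x^\a u_x|_{x=0}=f(u(0))\ge 0$, so $c\le 0$. But $u$ is bounded and $u'(x)=c\,x^{-\a}$; integrating near $x=0$ (where $-\a=2s-1$) and near $x=\infty$, boundedness of $u$ on $(0,\infty)$ forces $c=0$, whence $u$ is constant. I expect the main obstacle to be the rigorous justification of the first step: namely that $\varphi=|\nabla_y u|\eta_R$ is an admissible test function in \eqref{sta1} (it is only locally Lipschitz away from the zero set of $\nabla_y u$, so one regularizes $|\nabla_y u|$ to $\sqrt{|\nabla_y u|^2+\varepsilon}$ and uses the Kato-type inequality in the degenerate setting), together with verifying the weighted Caccioppoli/energy-growth bound for $x^\a|\nabla u|^2$ with the correct power of $R$ — this is the quantitative heart of the argument and the place where the structure of the $A_2$-weight $x^\a$ and the results of \cite{CS} must be invoked carefully. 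Finally, Theorem~\ref{liouvilleFrac} follows immediately from Theorem~\ref{liouvilleBdy} via Theorem~\ref{CSPoisson}: the lifting $u=P*v$ of a bounded stable solution $v$ is a bounded stable weak solution of \eqref{bdyFrac}, hence constant, hence $v=u|_{x=0}$ is constant.
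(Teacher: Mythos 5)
There is a genuine gap, and it sits exactly where you locate ``the heart of the argument.'' What stability gives when you test with $\varphi=|\nabla_y u|\,\eta$ (after summing the linearized identities for the tangential derivatives $u_{y_i}$) is a geometric Poincar\'e inequality of Sternberg--Zumbrun type: the quantity controlled by $\int x^\a|\nabla_y u|^2|\nabla\eta|^2$ is the Kato deficit $\sum_i|\nabla u_{y_i}|^2-\big|\nabla|\nabla_y u|\big|^2$ together with a curvature term of the level sets of $u(x,\cdot)$ --- \emph{not} $\big|\nabla|\nabla u|\big|^2$ as you wrote. Moreover only the tangential derivatives may be used: $u_x$ does not solve the linearized problem (the boundary condition is $-x^\a u_x=f(u)$, so differentiating in $x$ produces no cancellation of the $f'(u)$ boundary terms, and $u_x$ is not even regular up to $\{x=0\}$). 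Consequently, letting the right-hand side tend to zero yields equality in Kato's inequality and flatness of the level sets in $y$, i.e.\ one-dimensional symmetry $u(x,y)=u_0(x,\omega\cdot y)$ --- this is precisely Theorem~\ref{aux:P} of \cite{SV}, which the paper quotes --- and \emph{not} $\nabla_y u\equiv0$. Your step~3 only treats the case $u=u(x)$; the possibility of a nonconstant bounded profile $u_0$, monotone in $\omega\cdot y$, is left completely open. Ruling it out is a separate and essential step: the paper does it via the 1-D alternative (Theorem~\ref{sign}) and the Hamiltonian identity (Theorem~\ref{hamil}), which forces $\int_{\a^-}^{\a^+}f=0$, incompatible with $f\ge0$ unless $f\equiv0$ on the range of $u_0$, after which a reflection/Liouville argument gives constancy. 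None of this appears in your proposal.

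The energy estimate and the dimension count also need repair. The bound $\int_{B_R^+}x^\a|\nabla u|^2\lesssim R^{n+\a-1}=R^{n-2s}$ is not a bare Caccioppoli estimate: testing with $(u-c)\psi_R$ leaves the boundary term $\int_{\partial\R^{n+1}_+}f(u)(u-c)\psi_R\,dy$, which is generically of order $R^n$ and would only allow $n\le2$. The paper kills it by choosing $c=M=\sup u$ and using $f\ge0$, so that $f(u)(u-M)\le0$ and the term can be dropped --- this is where the sign of $f$ enters the energy bound, and your sketch never uses it there. Finally your borderline is miscomputed: with the correct growth $R^{n-2s}$ the cutoff argument requires $n-2s\le2$, i.e.\ $n\le 2+2s$, which does give $n\le3$ for $s\ge\tfrac12$ and $n\le2$ for $s<\tfrac12$; the borderline $n\le1+2s$ you state would only give $n\le2$ for $s\in[\tfrac12,1)$ and does not match the theorem. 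The last reduction (Theorem~\ref{liouvilleFrac} from Theorem~\ref{liouvilleBdy} via the Poisson lifting) is fine and is the same as in the paper.
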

 
 We do not know whether Theorem \ref{liouvilleBdy} holds for $n=4$.  We note that for the standard Laplacian (case $s=1$), the theorem is true at least up to dimension $n=4$ (see \cite{DF}).  
 
 \section{Preliminary results}
 
 In this section, we give some preliminary results on the boundary problem \eqref{bdyFrac} for $n=1$, namely 
 
 \begin{equation}\label{1D}
\left \{
\begin{matrix} 
{\rm div}\, (x^\a \nabla u)=0 \qquad 
{\mbox{ on $\R^{2}_+ $}} 
\\
-x^\a \partial_x  u= f(u)  \geq 0. 
\qquad{\mbox{ in $\partial\R^{2}_+ $,}}\end{matrix}\right . \end{equation} 

We first state a boundary version of a well-known Liouville theorem of Berestycki, Caffarelli and Nirenberg (see \cite{BCN}). The following result is proved in \cite{CS} (see also \cite{CSM}). We include the proof here for the sake of completeness.  

\begin{thm}(\cite{CS})\label{liouville}
Let $\varphi \in L^{\infty}_{loc}(\mathbb{R}_+^{n+1})$ be a positive function. Suppose that $\sigma \in
H^1_{loc}(\mathbb{R}_+^{n+1})$, that 
$$x^\alpha |\nabla \sigma |^2 \in L_{loc}^1(\R^{n+1}_+)$$
and that $\sigma$ solves 
\begin{equation}\label{liouveq}
\begin{cases}
{\rm div}(x^\alpha \varphi^2 \nabla\sigma) = 0
&\quad \hbox{in } \R^{n+1}_+\\
-x^\alpha \partial_x \sigma  \leq 0
&\quad \hbox{on } \partial\R^{n+1}_+
\end{cases}
\end{equation}
in the weak sense. Assume that for every $R >1$, 
\begin{equation}\label{intR2}
\int_{B^+_R} x^{\alpha} (\sigma \varphi)^2 \;dxdy\leq C R^2
\end{equation}
for some constant $C$ independent of $R$. 

Then $\sigma$ is constant. 
\end{thm}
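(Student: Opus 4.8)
The proof follows the strategy of the classical Liouville theorem of Berestycki, Caffarelli and Nirenberg \cite{BCN}, adapted to the degenerate weight $x^\alpha$ and to the Neumann-type condition on $\partial\R^{n+1}_+$: one first derives a Caccioppoli inequality for $\sigma$ with weight $x^\alpha\varphi^2$, and then defeats the quadratic growth \eqref{intR2} by means of a logarithmically truncated cut-off.

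First I would test the weak form of \eqref{liouveq} against $\psi=\sigma\zeta^2$, where $\zeta$ is a nonnegative, bounded, Lipschitz function vanishing outside a large ball. (The truncation $\sigma\mapsto\max\{-M,\min\{M,\sigma\}\}$ makes $\psi$ admissible and bounded; one then lets $M\to+\infty$. The hypotheses $\sigma\in H^1_{loc}$, $x^\alpha|\nabla\sigma|^2\in L^1_{loc}$ and $\varphi\in L^\infty_{loc}$ ensure all the integrals are finite.) Since $\nabla(\sigma\zeta^2)=\zeta^2\nabla\sigma+2\sigma\zeta\,\nabla\zeta$ and the resulting boundary integral $\int_{\partial\R^{n+1}_+}(-x^\alpha\partial_x\sigma)\,\varphi^2\sigma\zeta^2\,dy$ is nonpositive by the sign condition on the conormal derivative in \eqref{liouveq} (in the applications it simply vanishes), one is left with
\[
\int_{\R^{n+1}_+}x^\alpha\varphi^2\zeta^2|\nabla\sigma|^2\,dxdy\le -2\int_{\R^{n+1}_+}x^\alpha\varphi^2\,\sigma\zeta\,\nabla\sigma\cdot\nabla\zeta\,dxdy .
\]
A Young inequality on the right-hand side absorbs half of the left-hand side and yields the Caccioppoli estimate
\[
\int_{\R^{n+1}_+}x^\alpha\varphi^2\zeta^2|\nabla\sigma|^2\,dxdy\le C\int_{\R^{n+1}_+}x^\alpha(\sigma\varphi)^2|\nabla\zeta|^2\,dxdy .
\]

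Next I would choose $\zeta=\zeta_R$ equal to $1$ on $B_R^+$, equal to $0$ outside $B_{R^2}^+$, and equal to $2-\log|z|/\log R$ in between, so that $|\nabla\zeta_R|\le(|z|\log R)^{-1}$ on the annulus $B_{R^2}^+\setminus B_R^+$ and $\zeta_R$ vanishes elsewhere. Splitting that annulus into the $O(\log R)$ dyadic shells $B_{2^{k+1}R}^+\setminus B_{2^kR}^+$, bounding $|z|^{-2}$ by $(2^kR)^{-2}$ on the $k$-th shell, and using $\int_{B_{2^{k+1}R}^+}x^\alpha(\sigma\varphi)^2\le C(2^{k+1}R)^2$ from \eqref{intR2}, one sees that each shell contributes $O(1)$ to $\int_{B_{R^2}^+\setminus B_R^+}x^\alpha(\sigma\varphi)^2|z|^{-2}\,dxdy$, so this integral is at most $C\log R$. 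Hence the right-hand side of the Caccioppoli estimate is at most $C\log R/(\log R)^2=C/\log R$, and letting $R\to+\infty$ forces $\int_{B_{R_0}^+}x^\alpha\varphi^2|\nabla\sigma|^2=0$ for every fixed $R_0$. Since $\varphi>0$, this means $\nabla\sigma\equiv0$, i.e. $\sigma$ is constant.

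The integration by parts and the Young inequality are routine. The heart of the matter is the dyadic bookkeeping behind the logarithmic cut-off, which is precisely where the quadratic growth in \eqref{intR2} is exploited in a borderline-sharp fashion — a growth rate $R^{2+\varepsilon}$ would no longer close the argument. The only other delicate points are the truncation needed to legitimize $\sigma\zeta^2$ as a test function and the exact meaning of the boundary term, so that the hypothesis $-x^\alpha\partial_x\sigma\le0$ may be invoked to discard it.
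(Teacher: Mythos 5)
Your argument is correct, and its first half (testing the weak inequality against $\sigma\zeta^2$, discarding the boundary term by the sign of the conormal derivative, which in the applications is in fact zero) coincides with the paper's; if anything you are more careful than the paper, which drops the boundary contribution and the admissibility of the test function without comment. Where you genuinely diverge is in how the quadratic growth \eqref{intR2} is exploited. The paper does \emph{not} absorb by Young's inequality: it applies Cauchy--Schwarz while keeping the gradient factor localized on the annulus $\{R<r<2R\}$, so that the Dirichlet-type quantity $L_R=\int x^\alpha\zeta_R^2\varphi^2|\nabla\sigma|^2$ satisfies $L_R\le C\bigl(\int_{\{R<r<2R\}}x^\alpha\zeta_R^2\varphi^2|\nabla\sigma|^2\bigr)^{1/2}\le C L_R^{1/2}$; this gives a bound uniform in $R$, hence finiteness of $\int_{\R^{n+1}_+}x^\alpha\varphi^2|\nabla\sigma|^2$, and then the annulus term is a tail of a convergent integral and tends to zero, forcing the energy to vanish. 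You instead absorb by Young to get a clean Caccioppoli inequality and then kill the right-hand side with a logarithmic cut-off between $R$ and $R^2$, the dyadic shell count turning \eqref{intR2} into an $O(\log R)$ bound that is beaten by the $(\log R)^{-2}$ from $|\nabla\zeta_R|^2$. Both closures are classical descendants of Berestycki--Caffarelli--Nirenberg and both use the $R^2$ growth in a sharp way; the paper's version is shorter and works with a plain cut-off at a single scale, while yours yields an explicit quantitative decay $C/\log R$ and is marginally more robust (it would still conclude under a growth like $R^2$ times a sufficiently slowly growing factor, where the annulus-retention argument only gives a non-uniform bound), at the price of the dyadic bookkeeping.
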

\begin{proof}
We adapt the proof given in \cite{CSM}. Let $\zeta$ be a $C^\infty$ function on $\R^+$ such that
$0\le\zeta\le 1$ and  
$$
\zeta = \begin{cases}  1&\quad\text{for $0\le t\le 1$,}\\
                0&\quad\text{for $t\ge 2$.}
\end{cases}
$$
For $R>1$ and $(x,y)\in\R^{n+1}_{+}$, let
$\zeta_R(x,y)=\zeta \left( r / R\right)$, where $r=\vert(x,y)\vert$.

Multiplying \eqref{liouveq} by $\zeta_R^2$ and integrating by parts
in $\R^{n+1}_+$, we obtain
\begin{equation*}
\begin{split}
&\int_{\R^{n+1}_+} x^\alpha \zeta_R^2\, \varphi^2 \vert\nabla\sigma\vert^2\;dxdy \leq 
2 \int_{\R^{n+1}_+}x^\alpha \zeta_R\, \varphi^2 \sigma\, \nabla\zeta_R\, \nabla\sigma \;dxdy\\
& \le 2 \left [ \int_{\R^{n+1}_+\cap\{R<r<2R \}} 
\zeta_R^2\, \varphi^2 x^\alpha \vert\nabla\sigma\vert^2\;dxdy \right ]^{1/2}
\left [ \int_{\R^{n+1}_+} \varphi^2 x^\alpha \sigma^2 \vert\nabla\zeta_R\vert^2 \;dxdy
\right ]^{1/2} \\
& \le C \left [ \int_{\R^{n+1}_+\cap\{R<r<2R \}} 
\zeta_R^2\, \varphi^2 x^{\alpha} \vert\nabla\sigma\vert^2 \;dxdy\right ]^{1/2}
\left [\frac{1}{R^2} \int_{B_{2R}^+} x^{\alpha} (\varphi\sigma)^2 \;dxdy
\right ]^{1/2} ,    
\end{split} 
\end{equation*}
for some constant $C$ independent of $R$. Using hypothesis
\eqref{intR2}, we infer that 
\begin{multline}\label{boundann}
\int_{\R^{n+1}_+}x^\alpha \zeta_R^2\, \varphi^2 \vert\nabla\sigma\vert^2\;dxdy \le \\
C \left [ \int_{\R^{n+1}_+\cap\{R<r<2R \}} 
\zeta_R^2\, \varphi^2 x^{\alpha} \vert\nabla\sigma\vert^2\;dxdy \right ]^{1/2} ,
\end{multline}
again with $C$ independent of $R$. Hence,
$\int_{\R^{n+1}_+} \zeta_R^2\, \varphi^2 x^{\alpha} \vert\nabla\sigma\vert^2\;dxdy \le C$ and,
letting $R\rightarrow\infty$, we deduce
$\int_{\R^{n+1}_+}\varphi^2 x^\alpha \vert\nabla\sigma\vert^2\;dxdy \le C$.
It follows that the right hand side of \eqref{boundann} tends to
zero as $R\rightarrow\infty$, and therefore
$\int_{\R^{n+1}_+} x^\alpha \varphi^2 \vert\nabla\sigma\vert^2\;dxdy =0$.
We conclude that $\sigma$ is constant.
\end{proof}

A second important lemma is the following, which can also be found in \cite{CS} (see also \cite{CSM}). 
\begin{lemma}(\cite{CS})\label{polipo}
Let $d$ be a bounded, H\"older continuous 
function on $\partial\R^{2}_+$. Then,
\begin{equation}\label{stablebis}
\int_{\R^{2}_+} x^\alpha |\nabla\xi|^2 \;dxdy+\int_{\partial\R^{2}_+}
d(y)\xi^2\;dy\ge 0
\end{equation}
for every function $\xi\in C^1(\overline{\R^{2}_+})$ with compact support in 
$\overline{\R^{n+1}_+}$, if and only if 
there exists a function $\varphi $ such that $\varphi >0$ in $\overline{\R^{2}_+}$ and
\begin{equation} \label{linear}
\begin{cases}
\mbox{div}\, (x^\alpha \nabla \varphi) = 0&\text{ in } \R^{2}_+\\ 
-x^\alpha \dfrac{\partial\varphi}{\partial x}+d(y)\varphi = 0&\text{ on }
\partial\R^{2}_+ .
\end{cases}
\end{equation}
\end{lemma}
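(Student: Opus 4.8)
The plan is to establish the two implications separately. The reverse implication — that a positive solution of \eqref{linear} forces the inequality \eqref{stablebis} — is a ground-state substitution and is the easy part. The forward implication — producing a positive $\varphi$ solving \eqref{linear} out of the validity of \eqref{stablebis} — is the substantial one, and I would obtain it by exhausting $\R^2_+$ with half-balls, solving an approximate problem on each, and passing to a limit; getting the limiting problem to have boundary eigenvalue exactly $0$ (as \eqref{linear} demands) is the delicate point.

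For the reverse implication, start from a positive $\varphi$ solving \eqref{linear}; by the degenerate-elliptic regularity of \cite{CS} it is smooth in $\R^2_+$ and, together with $x^\alpha\partial_x\varphi$, has a continuous trace on $\partial\R^2_+$, so $1/\varphi$ is locally bounded on $\overline{\R^2_+}$. Given $\xi\in C^1(\overline{\R^2_+})$ with compact support, put $\sigma:=\xi/\varphi$; then pointwise $x^\alpha|\nabla\xi|^2 = x^\alpha\nabla\varphi\cdot\nabla(\xi^2/\varphi) + x^\alpha\varphi^2|\nabla\sigma|^2$. Integrating over $\R^2_+$ and inserting $\psi:=\xi^2/\varphi$ — which is bounded, compactly supported, with $x^\alpha|\nabla\psi|^2\in L^1$, hence an admissible test function — into the weak formulation of \eqref{linear}, the first term on the right becomes $\int_{\partial\R^2_+}(-x^\alpha\partial_x\varphi)\psi\,dy = -\int_{\partial\R^2_+}d\,\xi^2\,dy$. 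Therefore the left-hand side of \eqref{stablebis} equals $\int_{\R^2_+}x^\alpha\varphi^2|\nabla\sigma|^2\,dxdy\ge0$, and a density argument covers the stated class of $\xi$.

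For the forward implication, write $B_R^+:=B_R\cap\R^2_+$, $\partial^0 B_R^+:=B_R\cap\partial\R^2_+$, $\partial^+ B_R^+:=\partial B_R\cap\R^2_+$, and let $\lambda_R$ be the infimum of $\int_{B_R^+}x^\alpha|\nabla\xi|^2+\int_{\partial^0 B_R^+}d\,\xi^2$ over $\xi\in H^1(x^\alpha,B_R^+)$ vanishing on $\partial^+ B_R^+$ and with $\int_{\partial^0 B_R^+}\xi^2=1$. By \eqref{stablebis} and density of compactly supported functions one has $\lambda_R\ge0$, and $\lambda_R$ is non-increasing in $R$; the first eigenfunction does not change sign, and the Harnack inequality up to $\{x=0\}$ for $\mathrm{div}(x^\alpha\nabla\cdot)$ with bounded oblique boundary data \cite{CS} lets us take it positive on $B_R^+\cup\partial^0 B_R^+$. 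The subtlety is that \eqref{linear} asks for boundary eigenvalue exactly $0$, whereas these eigenfunctions carry $\lambda_R$, possibly positive, so I would split cases. If $\lambda_R=0$ for some (hence all larger) $R$, the first eigenfunction already solves $\mathrm{div}(x^\alpha\nabla\varphi_R)=0$ in $B_R^+$ with $-x^\alpha\partial_x\varphi_R+d\,\varphi_R=0$ on $\partial^0 B_R^+$. If $\lambda_R>0$ for every $R$, then the form $a_R(\xi,\eta)=\int_{B_R^+}x^\alpha\nabla\xi\cdot\nabla\eta+\int_{\partial^0 B_R^+}d\,\xi\eta$ is coercive on functions vanishing on $\partial^+ B_R^+$ — interpolating $a_R(\xi,\xi)\ge\lambda_R\|\xi\|^2_{L^2(\partial^0)}$ with the Poincar\'e--trace bound $a_R(\xi,\xi)\ge c(R)\|\xi\|^2_{H^1(x^\alpha)}-\|d\|_\infty\|\xi\|^2_{L^2(\partial^0)}$ — so the problem with homogeneous condition $-x^\alpha\partial_x\varphi_R+d\,\varphi_R=0$ on $\partial^0 B_R^+$ and datum $\varphi_R\equiv1$ on $\partial^+ B_R^+$ has a unique solution, which is $\ge0$ (test with $\varphi_R^-$, using $\lambda_R>0$) and then $>0$ on $B_R^+\cup\partial^0 B_R^+$ by Harnack. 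Either way one obtains, for each large $R$, a positive $\varphi_R$ with $\mathrm{div}(x^\alpha\nabla\varphi_R)=0$ in $B_R^+$ and $-x^\alpha\partial_x\varphi_R+d\,\varphi_R=0$ on $\partial^0 B_R^+$.

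To conclude, normalize $\varphi_R(0)=1$. For fixed $\rho$ and $R>2\rho$, the Harnack inequality bounds $\varphi_R$ above and below on $B_\rho^+$ by constants depending only on $\rho$, $\alpha$, $\|d\|_\infty$, and the boundary H\"older estimates of \cite{CS} give uniform $C^{0,\gamma}(\overline{B_\rho^+})$ bounds for some $\gamma>0$, hence precompactness in $C(\overline{B_\rho^+})$; a diagonal Arzel\`a--Ascoli extraction produces $\varphi_R\to\varphi$ locally uniformly and weakly in $H^1(x^\alpha)$ on compacts. The limit satisfies $\varphi(0)=1$, retains the local positive lower bounds (so $\varphi>0$ on $\overline{\R^2_+}$), and, passing to the limit in the weak formulations, solves \eqref{linear}. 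Beyond the eigenvalue-zero point, the whole argument rests on the degenerate-elliptic toolbox for $\mathrm{div}(x^\alpha\nabla\cdot)$ with a reaction boundary condition — existence, strong maximum principle, Harnack up to $\{x=0\}$, boundary H\"older estimates, all uniform in $R$ — which is exactly what \cite{CS}, following \cite{CSM}, provides; I expect that this bookkeeping, rather than any single estimate, is where care is needed.
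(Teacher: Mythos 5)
The paper does not actually prove Lemma \ref{polipo}: it is quoted from \cite{CS} (see also \cite{CSM}), and your argument is essentially the proof given there — the ground-state substitution $\xi=\varphi\sigma$ with test function $\xi^2/\varphi$ for the ``if'' direction, and, for the ``only if'' direction, positive solutions on half-balls $B_R^+$ (a first eigenfunction when the boundary eigenvalue vanishes, otherwise the coercive mixed problem with datum $1$ on $\partial^+B_R^+$, which is exactly the device that keeps the boundary condition homogeneous rather than carrying a leftover eigenvalue), followed by normalization, Harnack and H\"older estimates up to $\{x=0\}$ from \cite{CS}, and a diagonal limit; this is correct. The only cosmetic remark is that your first case $\lambda_R=0$ in fact never occurs, since $\lambda_R$ is strictly decreasing in $R$ (the zero-extension of a first eigenfunction would otherwise be a nonnegative solution vanishing on an open set, contradicting the strong maximum principle), but keeping both cases as you do is harmless and spares you that extra ingredient.
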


We can prove now the following theorem. 

\begin{thm}(\cite{CS})\label{sign}
Let $u$ be a stable bounded weak solution of \eqref{1D}. Then, 
\begin{itemize}
\item either $u_y >0$ in $\overline{\R^2_+}$,
\item either $u_y <0$ in $\overline{\R^2_+}$,
\item or $u_y \equiv 0$ in $\overline{\R^2_+}$.
\end{itemize} 
\end{thm}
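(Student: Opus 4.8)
The plan is to show that the tangential derivative $\sigma:=u_y$ coincides, up to multiplication by a fixed positive function, with a constant; the trichotomy is then dictated by the sign of that constant.

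First I would invoke the regularity theory of \cite{CS}: the bounded weak solution $u$ of \eqref{1D} is smooth in $\R^2_+$ and regular enough up to $\partial\R^2_+$ (with $v:=u|_{\partial\R^2_+}$ bounded and Hölder continuous) that one may differentiate \eqref{1D} in the $y$-variable. Then $\sigma=u_y$ is a weak solution of the linearized problem
\[
\operatorname{div}(x^\alpha\nabla\sigma)=0\ \text{ in }\R^2_+,\qquad -x^\alpha\sigma_x=f'(u)\,\sigma\ \text{ on }\partial\R^2_+,
\]
with $\sigma\in H^1_{\mathrm{loc}}(\R^2_+)$ and $x^\alpha|\nabla\sigma|^2\in L^1_{\mathrm{loc}}(\overline{\R^2_+})$. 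I would also record the energy bound obtained by testing $\operatorname{div}(x^\alpha\nabla u)=0$ against $(u-u(0,0))\zeta_R^2$ (with $\zeta_R$ as in the proof of Theorem \ref{liouville}) and using that $u$ and $f(u)$ are bounded while $\partial\R^2_+\cap B^+_{2R}$ has length $\sim R$, namely
\[
\int_{B^+_R}x^\alpha|\nabla u|^2\,dx\,dy\le CR\le CR^2\qquad\text{for all }R\ge1 .
\]

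Next I would exploit stability. Every $\xi\in C^1(\overline{\R^2_+})$ with compact support lies in $H^1(x^\alpha)$ (since $\alpha>-1$), so the stability inequality \eqref{sta1} gives in particular that \eqref{stablebis} holds with $d(y):=-f'(u(y,0))$, a function which is bounded and Hölder continuous because $v$ is bounded Hölder and $f'\in C^{0,\beta}$. Lemma \ref{polipo} then produces a function $\varphi>0$ on $\overline{\R^2_+}$ solving
\[
\operatorname{div}(x^\alpha\nabla\varphi)=0\ \text{ in }\R^2_+,\qquad -x^\alpha\varphi_x=f'(u)\,\varphi\ \text{ on }\partial\R^2_+,
\]
that is, $\varphi$ solves exactly the same linear equation as $\sigma$.

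Finally I would set $w:=\sigma/\varphi$ (legitimate since $\varphi$ is positive and bounded below on compact sets, so $w\in H^1_{\mathrm{loc}}$ with $x^\alpha|\nabla w|^2\in L^1_{\mathrm{loc}}$). The ground-state substitution, using the two linear equations, gives $\operatorname{div}(x^\alpha\varphi^2\nabla w)=0$ in $\R^2_+$ and $-x^\alpha w_x=0$ on $\partial\R^2_+$; moreover $\int_{B^+_R}x^\alpha(\varphi w)^2\,dx\,dy=\int_{B^+_R}x^\alpha u_y^2\,dx\,dy\le\int_{B^+_R}x^\alpha|\nabla u|^2\,dx\,dy\le CR^2$, so hypothesis \eqref{intR2} of Theorem \ref{liouville} is met (applied with $n=1$, $\varphi$ there equal to $\varphi$ and $\sigma$ there equal to $w$). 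Theorem \ref{liouville} then forces $w\equiv c$ for some constant $c$, whence $u_y=c\,\varphi$ on $\overline{\R^2_+}$; since $\varphi>0$, we get $u_y>0$ everywhere if $c>0$, $u_y<0$ everywhere if $c<0$, and $u_y\equiv 0$ if $c=0$. I expect the main obstacle to be bookkeeping rather than conceptual: checking that the regularity of \cite{CS} is sufficient to make $\sigma=u_y$ a genuine weak solution of the linearized problem in the required integrability class, that the energy constant is independent of $R$, and that the substitution $w=\sigma/\varphi$ preserves the weak formulation up to the boundary.
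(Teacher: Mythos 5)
Your proposal is correct and follows essentially the same route as the paper: stability plus Lemma \ref{polipo} yields a positive solution $\varphi$ of the linearized problem, the quotient $u_y/\varphi$ satisfies the degenerate equation with zero conormal derivative, the $CR^2$ bound comes from testing the equation with $u$ (up to a constant) times a cutoff, and Theorem \ref{liouville} gives $u_y=c\varphi$, whence the trichotomy. The only difference is cosmetic (your choice of test function $(u-u(0,0))\zeta_R^2$ versus the paper's $u\tau^2$, and your more explicit bookkeeping of the regularity needed to differentiate in $y$).
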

\begin{proof}
The proof is already contained in \cite{CS} (see also \cite{CSM}) but we reproduce it here for sake of completeness.

Since $u$ is assumed to be a stable solution, then \eqref{stablebis}
holds with $d(y)=-f'(u(0,y))$. 
Hence, by Lemma~\ref{polipo}, there exists a function
$\varphi >0$ in $\overline{\R^{2}_+}$ such that
\begin{equation*} \label{linearf}
\begin{cases}
\mbox{div}\,(x^\alpha \nabla  \varphi) = 0&\text{ in } \R^{2}_+\\ 
-x^\alpha \dfrac{\partial\varphi}{\partial x }-f'(u(0,y))\varphi = 0&\text{ on }
\partial\R^{2}_+ .
\end{cases}
\end{equation*}

Consider the function
$$
\sigma =\frac{u_{y}}{\varphi}.
$$
It is easy to check that  
$$
{\rm div}(x^\alpha \varphi^2 \nabla\sigma)=0 \quad\; \mbox{in }\R^{2}_+ 
$$
and $-x^\alpha \frac{\partial \sigma}{\partial x}=0$ on
$\partial\R^{2}_+$. We can use the Liouville Theorem  \ref{liouville},
and deduce that $\sigma$ is constant, provided
$$
\int_{B_R^+} x^\alpha (\varphi\sigma)^2 \;dxdy \leq CR^2
\quad\quad\mbox{ for all } R>1 ,
$$
holds for some constant $C$ independent of $R$. 
But note that
$\varphi\sigma=u_{y}$, and therefore
$$
\int_{B_R^+} x^\alpha (\varphi\sigma)^2\;dxdy \le
\int_{B_R^+} x^\alpha |\nabla u|^2\;dxdy  
$$
and we end up estimating the last inequality. Using the weak formulation \eqref{eq1} with the test function $u
\tau ^2$ where $\tau$ is a cutoff
function
such that $0\le\tau\in C^1_c (B_{2R})$, with $\tau=1$ 
in $B_{R}$ and $|\nabla \tau|\le 8/R$, with $R\ge1$, one gets that 
\begin{eqnarray*}&& 
\int_{{\R^{2}_+}}
x^\alpha\,
\big( |\nabla u |^2 \tau ^2+2 \tau \nabla u \cdot \nabla \tau 
\big)\;dxdy=
\int_{\partial \R^{2}_+} f(u) u\tau^2\;dy. 
\end{eqnarray*} 
Thus, by the Cauchy-Schwarz
inequality,
\begin{eqnarray*}
\int_{\R_+^{2}}x^\alpha\,|\nabla u|^2\tau^2\;dxdy
&\le& \frac 12 \int_{\R_+^{2}}x^\alpha\,|\nabla u|^2\tau^2\;dxdy,
\\&&\quad
+
C_* \Big(
\int_{\R_+^{2}}
x^\alpha |\nabla \tau|^2\;dxdy
+\int_{\R}|f(u)|\,|u|\,\tau^2\;dy
\Big)\\
\end{eqnarray*}
for a suitable constant $C_*>0$.

Since $u$ is bounded and $f$ is $C^{1,\beta}$, one gets the desired result. As a consequence, we have 
$$u_y=c \varphi $$
and depending on the sign of the constant $c$, this gives the desired conclusion. 
\end{proof}

\section{Proof of Theorem \ref{liouvilleBdy}}

As previously described, the important step is to get a Liouville theorem for the boundary problem \eqref{bdyFrac}. We adopt the method developed in \cite{DF}, based on choosing suitable test functions in the weak formulation  
\begin{equation}
\int_{{\R^{n+1}_+}} x^\alpha \nabla u\cdot 
\nabla \varphi\;dx\;dy= 
\int_{\partial {\R^{n+1}_+}} 
f(u)\varphi\;dy
\end{equation}

We first prove the following energy bound. 

\begin{lemma}
Let $u$ be a bounded weak solution of  \eqref{bdyFrac}. Then, there exists a constant $C>0$ such that for all $R>1$ 
\begin{equation}\label{energ}
\int_{B^+_R} x^\alpha |\nabla u|^2\;dx\;dy \leq C R^{n+\alpha-1}. 
\end{equation} 
\end{lemma}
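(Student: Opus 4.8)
The plan is to derive a Caccioppoli-type inequality by choosing a well-adapted test function in the weak formulation \eqref{eq1}. Fix a constant $M$ with $|u|\le M$ on $\overline{\R^{n+1}_+}$, set $w:=u-M\le 0$, and let $\eta$ be a radial cutoff with $\eta\equiv 1$ on $B_R^+$, $\eta\equiv 0$ outside $B_{2R}^+$ and $|\nabla\eta|\le C/R$. Then $\varphi:=w\eta^2$ is an admissible test function: it is bounded, it vanishes outside $B_{2R}$, it is locally Lipschitz in the interior of $\R^{n+1}_+$ (since $u$ is smooth there by elliptic regularity), and $x^\alpha|\nabla\varphi|^2\in L^1(B_{2R}^+)$ because $u$ is bounded and $x^\alpha|\nabla u|^2\in L^1_{\rm loc}(\overline{\R^{n+1}_+})$ by \eqref{hgasj7717177}.

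Plugging $\varphi=w\eta^2$ into \eqref{eq1} and using $\nabla\varphi=\eta^2\nabla u+2\eta w\,\nabla\eta$, the boundary term becomes $\int_{\partial\R^{n+1}_+}f(u)\,w\,\eta^2\,dy$, which is $\le 0$ since $f\ge 0$ and $w\le 0$; discarding it gives
$$\int_{\R^{n+1}_+}x^\alpha|\nabla u|^2\eta^2\,dx\,dy\le -2\int_{\R^{n+1}_+}x^\alpha\,\eta\, w\,\nabla u\cdot\nabla\eta\,dx\,dy.$$
A Young inequality on the right-hand side, followed by absorbing $\frac12\int x^\alpha|\nabla u|^2\eta^2$ into the left-hand side, yields
$$\int_{B_R^+}x^\alpha|\nabla u|^2\,dx\,dy\le C\int_{\R^{n+1}_+}x^\alpha w^2|\nabla\eta|^2\,dx\,dy.$$

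Finally I estimate the right-hand side directly: $w^2\le 4M^2$, while $\nabla\eta$ is supported in $B_{2R}^+\setminus B_R^+$ with $|\nabla\eta|\le C/R$, so the integral is bounded by $\frac{C}{R^2}\int_{B_{2R}^+}x^\alpha\,dx\,dy$. Since $B_{2R}^+\subset(0,2R)\times\{|y|<2R\}$ and $\int_0^{2R}x^\alpha\,dx=\frac{(2R)^{\alpha+1}}{\alpha+1}$ is finite (here $\alpha+1>0$ as $\alpha\in(-1,1)$), we get $\int_{B_{2R}^+}x^\alpha\,dx\,dy\le CR^{n+\alpha+1}$ and hence $\int_{B_R^+}x^\alpha|\nabla u|^2\,dx\,dy\le CR^{n+\alpha-1}$, as claimed. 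The only point requiring a little care is the verification that $\varphi=w\eta^2$ is a legitimate test function and the bookkeeping of the sign of the boundary term; the rest is a routine computation.
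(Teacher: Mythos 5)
Your proof is correct, and it follows the same basic strategy as the paper (test the weak formulation with $(u-M)$ times a cutoff, and use $f\ge 0$ together with $u-M\le 0$ to discard the boundary term), but you handle the remaining cross term differently. The paper keeps a single cutoff $\psi_R$ and integrates by parts a second time, writing $\int x^\alpha (u-M)\nabla u\cdot\nabla\psi_R = \tfrac12\int x^\alpha\nabla (u-M)^2\cdot\nabla\psi_R$ and moving the derivative onto $x^\alpha\nabla\psi_R$; this produces an interior term involving $x^\alpha\Delta\psi_R$ and $\alpha x^{\alpha-1}\partial_x\psi_R$ plus a boundary term $\tfrac12\int_{\R^n}(u-M)^2 x^\alpha\partial_x\psi_R|_{x=0}\,dy$, each of which must be estimated separately and requires a $C^2$ cutoff with $|\Delta\psi_R|\le C/R^2$ and some care with the weight $x^{\alpha-1}$. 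Your route — squaring the cutoff, applying Young's inequality, and absorbing $\tfrac12\int x^\alpha|\nabla u|^2\eta^2$ into the left-hand side — is the standard Caccioppoli argument: it needs only $|\nabla\eta|\le C/R$, avoids the $x^{\alpha-1}$ bookkeeping and the extra boundary term entirely, and the final volume count $\int_{B_{2R}^+}x^\alpha\,dx\,dy\le CR^{n+\alpha+1}$ gives exactly the exponent $n+\alpha-1$. The one point to make explicit is that the absorption step is legitimate because $\int_{B_{2R}^+}x^\alpha|\nabla u|^2\,dx\,dy$ is finite a priori, which is precisely condition \eqref{hgasj7717177} in the definition of a weak solution (and is guaranteed for bounded solutions, as noted in the paper's footnote); since you invoke \eqref{hgasj7717177} when checking admissibility of the test function, this is covered, and your argument is complete.
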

\begin{proof}
Let $M=\sup_{\overline{\R^{n+1}_+}} u$. { Given $R>1$ and a cut-off function $\psi_{1}\in C^2_{c}(\R^{n+1})$ such that $\psi_{1}(z)=1$ on $|z|\leq 1$ and $\psi_{1}(z)=0$ on $|z|\geq 2$, let $\psi_R(x)=\psi_{1}(x/R)$.} We choose 
$$\varphi=(u-M) \psi_R. $$
This leads to 
$$\int_{\R^{n+1}_+} x^\alpha (u-M)\nabla u \cdot \nabla \psi_R \;dx\;dy+ \int_{\R^{n+1}_+} x^\alpha \psi_R |\nabla u|^2\;dx\;dy=$$
$$\int_{\R^n} f(u) (u-M) \psi_R \;dy\leq 0$$
since $f$ is nonnegative. 
Hence we have 
$$\int_{\R^{n+1}_+} x^\alpha \psi_R |\nabla u|^2 \;dx\;dy\leq -\int_{\R^{n+1}_+} x^\alpha (u-M)\nabla u \cdot \nabla \psi_R \;dx\;dy$$
and we are left to estimate the right hand side. Performing an integration by parts gives 
$$\int_{\R^{n+1}_+} x^\alpha (u-M)\nabla u \cdot \nabla \psi_R \;dx\;dy= \frac{1}{2}\int_{\R^{n+1}_+} x^\alpha \nabla (u-M)^2 \cdot \nabla \psi_R\;dx\;dy=$$
$$-\frac{1}{2}\int_{\R^{n+1}_+} (u-M)^2 \nabla \cdot (x^\alpha \nabla \psi_R)\;dx\;dy +\frac{1}{2}\int_{\R^n}(u-M)^2 (x^\alpha \partial_x \psi_R)|_{x=0}\;dy= $$
$$I +II. $$

We have 
$$I=-\frac{1}{2}\int_{\R^{n+1}_+} (u-M)^2 \Big \{ x^\alpha \Delta \psi_R + \alpha x^{\alpha-1} \partial_x \psi_R \Big \}\;dx\;dy. $$

Since $|\nabla \psi_R| \leq C/R$ and $|\Delta \psi_R | \leq C/R^2$ on $B^+_{2R}$, we are led to
$$I \leq C M^2 \int_{B_{2R}^+} \left(\frac{x^\alpha}{R^2} + \frac{x^{\alpha-1}}{R}\right)\;dx\;dy, $$
Recall that $\psi_R \equiv 0$ on $B_R^+$. Then,
$$I \leq C M^2 \int_{R}^{2R} dx \int_{B'_R}dy \Big \{  \frac{x^\alpha}{R^2} + \frac{x^{\alpha-1}}{R} \Big \},  $$
where $B'_R$ is the unit ball of radius $R$ in $\R^n$. Hence 
$$I \leq C M^2 R^n \int_{R}^{2R} dx  \Big \{  \frac{x^\alpha}{R^2} + \frac{x^{\alpha-1}}{R} \Big \}  $$
and then 
$$I \leq C M^2 R^n \big \{ \frac{R^{\alpha+1}}{R^2} + \frac{R^{\alpha}}{R} \Big \} \leq C R^{n+\alpha -1}.  $$
We now come to the estimate of the term $II$. { By definition of $\psi_R$, there exists a constant $C>0$ such that $\left| x^\alpha\partial_{x}\psi_R(x,y) \right|\le CR^{\alpha-1}\chi_{B_{2R}}$ for all $(x,y)\in\R^{n+1}_{+}$. It follows that
$$II \leq C R^{n+\alpha-1 },$$
as desired.} 

\end{proof}

The next theorem is proved in \cite{SV} (see also \cite{CS}). 
\begin{thm}(\cite{SV},\cite{CS})\label{aux:P} 
Let $u$ be a stable bounded weak solution of \eqref{bdyFrac}.
Assume furthermore that  
there exists~$C_0\geq 1$ such that 
\begin{equation}\label{en:bound} 
\int_{B^+_R} x^\alpha |\nabla u|^2\;dx\;dy\le C_0\,  
R^2\end{equation} 
for any~$R\ge C_0$. 
 
Then there exist~$\omega \in \mathbb S^{n-1}$ 
and~$u_0: (0,+\infty) \times \R \rightarrow\R$ 
such that 
\begin{equation}\label{31bis}
u(x,y)=u_0(x,\omega 
\cdot y)\end{equation}
for any~$(x,y)\in\R^{n+1}_+$.
\end{thm}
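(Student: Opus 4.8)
The plan is to prove this "dimension reduction" statement by a classical moving-plane / sliding argument in each direction of $\R^n$, reducing matters to the one-dimensional result of Theorem \ref{sign}. First I would fix an arbitrary direction $\omega \in \mathbb S^{n-1}$ and consider the directional derivative $u_\omega := \omega\cdot\nabla_y u$. Differentiating the boundary equation \eqref{bdyFrac} in the $y$-directions shows that $u_\omega$ is a weak solution of the linearized problem
\begin{equation*}
\left\{
\begin{aligned}
{\rm div}\,(x^\alpha \nabla u_\omega) &= 0 &&\text{in }\R^{n+1}_+,\\
-x^\alpha \partial_x u_\omega &= f'(u)\,u_\omega &&\text{on }\partial\R^{n+1}_+.
\end{aligned}
\right.
\end{equation*}
Since $u$ is stable, Lemma \ref{polipo} (in its $n$-dimensional form, as used in \cite{CS},\cite{SV}) furnishes a positive function $\varphi$ with ${\rm div}(x^\alpha\nabla\varphi)=0$ in $\R^{n+1}_+$ and $-x^\alpha\partial_x\varphi = f'(u)\varphi$ on the boundary. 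Then $\sigma_\omega := u_\omega/\varphi$ solves the Liouville-type equation ${\rm div}(x^\alpha\varphi^2\nabla\sigma_\omega)=0$ with $-x^\alpha\partial_x\sigma_\omega = 0$ on the boundary, exactly as in the proof of Theorem \ref{sign}.

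The key point is then to verify the growth hypothesis \eqref{intR2} of Theorem \ref{liouville} for $\sigma_\omega$. Since $\varphi\sigma_\omega = u_\omega$ and $|u_\omega|\le|\nabla u|$, we have
\begin{equation*}
\int_{B_R^+} x^\alpha(\varphi\sigma_\omega)^2\,dx\,dy \le \int_{B_R^+} x^\alpha|\nabla u|^2\,dx\,dy \le C_0\,R^2
\end{equation*}
by the assumed energy bound \eqref{en:bound}. Hence Theorem \ref{liouville} applies and $\sigma_\omega$ is constant, so $u_\omega = c_\omega\,\varphi$ for some constant $c_\omega\in\R$; in particular $u_\omega$ has a sign. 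Running this for every $\omega\in\mathbb S^{n-1}$, the map $\omega\mapsto c_\omega$ is (up to normalization of $\varphi$) linear in $\omega$, hence of the form $c_\omega = \bar\omega\cdot\omega$ for some fixed vector $\bar\omega\in\R^n$. If $\bar\omega = 0$ then $\nabla_y u\equiv 0$ and $u$ depends only on $x$, which is the conclusion \eqref{31bis} with any $\omega$ and $u_0(x,t)=u(x,0)$. If $\bar\omega\ne 0$, set $\omega := \bar\omega/|\bar\omega|$; then for every direction $e\perp\omega$ we get $c_e = 0$, so $u_e\equiv 0$, which means $u$ is constant along all hyperplane directions orthogonal to $\omega$. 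This forces $u(x,y) = u_0(x,\omega\cdot y)$ for some function $u_0:(0,\infty)\times\R\to\R$, which is precisely \eqref{31bis}.

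The main obstacle I anticipate is the justification of the linearized equation for $u_\omega$ and of Lemma \ref{polipo} in the $n$-dimensional weighted setting — i.e.\ ensuring $u_\omega\in H^1_{loc}$ with $x^\alpha|\nabla u_\omega|^2\in L^1_{loc}$ and that all integrations by parts defining the weak formulations are legitimate. This requires the interior and boundary regularity theory for the degenerate operator ${\rm div}(x^\alpha\nabla\cdot)$ developed in \cite{CS} (and \cite{CSS}), together with the fact that $f\in C^{1,\beta}$ makes $f'(u)$ Hölder continuous on the boundary so that Schauder-type estimates give enough smoothness of $u$ in $y$ to differentiate. A secondary technical point is the passage from "$c_\omega$ depends linearly on $\omega$" to the representation \eqref{31bis}; this is elementary once one normalizes $\varphi$ consistently (e.g.\ $\varphi(e_1,0)=1$) so that $\omega\mapsto u_\omega(z)/\varphi(z)$ is the restriction of a fixed linear functional, independent of the base point $z$. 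These points are handled in \cite{SV},\cite{CS}, and the present proof would cite them rather than reprove them.
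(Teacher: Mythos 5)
Note first that the paper does not prove Theorem \ref{aux:P} at all: it is quoted from \cite{SV} (see also \cite{CS}), and the proof there goes by a quite different route, namely a Sternberg--Zumbrun-type geometric Poincar\'e inequality for stable solutions (controlling the curvature of the level sets of $u(x,\cdot)$ and the tangential gradient of $|\nabla_y u|$) combined with a logarithmic cut-off argument that exploits exactly the quadratic energy growth \eqref{en:bound}. Your proposal instead follows the classical Berestycki--Caffarelli--Nirenberg / Ambrosio--Cabr\'e scheme: produce a positive solution $\varphi$ of the linearized problem from stability, set $\sigma_\omega=\omega\cdot\nabla_y u/\varphi$, and apply the Liouville Theorem \ref{liouville} using \eqref{en:bound} to verify \eqref{intR2}; the linearity of $\omega\mapsto c_\omega$ then gives that $\nabla_y u$ is everywhere parallel to a fixed vector, i.e. \eqref{31bis}. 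This is a legitimate and arguably more elementary alternative (it reuses Theorem \ref{liouville} already stated in the paper, and avoids the geometric inequality), whereas the route of \cite{SV} avoids having to construct $\varphi$ on the whole half-space.

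The one place where your citation is doing real work, and which you should not present as already available in this paper, is the ``$n$-dimensional form of Lemma \ref{polipo}'': in the paper that lemma is stated only for $\R^2_+$ ($n=1$), and the implication you need (stability on all of $\R^{n+1}_+$ implies existence of a \emph{global} positive solution $\varphi$ of ${\rm div}(x^\alpha\nabla\varphi)=0$, $-x^\alpha\partial_x\varphi=f'(u)\varphi$) is not a formal corollary of the $1$D statement. It requires the standard but nontrivial construction via first eigenfunctions of the linearized problem on half-balls $B_R^+$ (nonnegative principal eigenvalues by stability), a normalization at a fixed point, and interior plus boundary Harnack inequalities for the degenerate operator with $A_2$ weight $x^\alpha$ to pass to the limit $R\to\infty$; one also needs enough regularity (as you note) to know that $\omega\cdot\nabla_y u$ is itself a weak solution of the linearized problem with locally finite weighted Dirichlet energy. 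Granting these ingredients, which are indeed in the spirit of \cite{CS} and \cite{BCN}, the rest of your argument (the bound $\int_{B_R^+}x^\alpha(\varphi\sigma_\omega)^2\le C_0R^2$, the application of Theorem \ref{liouville}, and the passage from $\nabla_y u=\varphi\,\bar c$ to the representation \eqref{31bis}) is correct.
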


We now can proceed to the proof of Theorem \ref{liouvilleBdy}. From Theorem \ref{aux:P}, we conclude that if $\alpha \in (-1,0]$ and $n\le3$, or if  $\alpha \in (0,1)$ and $n\le2$, then $u$ is of the form $u_0(x,\omega \cdot y)$. The function $u_0$ is bounded, stable  and satisfies (in the weak sense) 
\begin{equation}
\left \{
\begin{matrix} 
{\rm div}\, (x^\a \nabla u_0)=0 \qquad 
{\mbox{ on $\R^{2}_+ 
:=\R\times(0,+\infty)$}} 
\\
-x^\a \partial_x u_0 = f(u_0)  
\qquad{\mbox{ on $\R\times\{0\}$.}}\end{matrix}\right . \end{equation} 

From Theorem \ref{sign}, we then have that either $\partial_y u_0 \equiv 0$ or $u_0$ is strictly monotone  in $y$ in $\overline{\R^2_+}$. If $\partial_y u_0 \equiv 0$ in $\overline{\R^2_+}$, then $u_0$ is a bounded function depending only of $x$, hence   $u_0(x,y)=c_1 x^{1-\alpha}+c_2$, and by the boundedness of $u_0$, we have $c_1=0$.  

From now on, we assume that $u_0$ is strictly monotone in $y$. Since $u_0$ is bounded, this implies that $u_0(0,y)$ has limits when $y \rightarrow \pm \infty$.   We now reach a contradiction by invoking the following theorem, proved in \cite{CS} and relying on a Hamiltonian estimate, proved in \cite{CS}. 
\begin{thm}(\cite{CS})\label{hamil}
Let $u_0$ be a bounded weak solution of \eqref{1D} such that  
$$\lim_{y \rightarrow \pm \infty}u_0(0,y)=\alpha^{\pm}.$$
Assume in addition that $u_0(0,y)$ is strictly monotone in $y$. Then 
$$G(\alpha^+)=G(\alpha^-)$$
where $G'=-f. $
\end{thm}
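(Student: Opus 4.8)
# Proof Proposal for Theorem \ref{hamil}

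The plan is to exhibit an \emph{exact conserved quantity} for the one–dimensional problem \eqref{1D}, the one associated (via Noether's theorem) with invariance under translations in the $y$–variable, and then to evaluate it at $y=+\infty$ and $y=-\infty$. Write $u$ for $u_0$; by interior elliptic regularity for the non-degenerate operator ${\rm div}(x^\alpha\nabla\,\cdot\,)$, $u$ is smooth in the open half-plane $\R^2_+$. A direct computation from $x^\alpha u_{xx}+\alpha x^{\alpha-1}u_x+x^\alpha u_{yy}=0$ then shows that the vector field $(P,Q)$ defined by
$$P:=x^\alpha u_x u_y,\qquad Q:=\tfrac{x^\alpha}{2}\big(u_y^2-u_x^2\big)$$
is divergence-free in $\R^2_+$, i.e. $\partial_x P+\partial_y Q=0$. (This is one row of the stress–energy tensor of the weighted Dirichlet energy; it is precisely because the weight $x^\alpha$ depends only on $x$ that this combination is conserved.)

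Next I would integrate $\partial_x P+\partial_y Q=0$ over the rectangle $(\varepsilon,X)\times(a,b)$ with $0<\varepsilon<X$ and $a<b$, apply the divergence theorem (legitimate since the rectangle is compactly contained in $\R^2_+$, where $u$ is smooth), and then let $\varepsilon\to0$ and $X\to\infty$. From the boundary condition one has $x^\alpha u_x\to-f(u(0,\cdot))$ and $u_y\to u_y(0,\cdot)$ as $x\to0$, so $P(\varepsilon,\cdot)\to-f(g)\,g'$ with $g(y):=u(0,y)$, while $P(X,\cdot)\to0$ because $|\nabla u|$ decays in $x$. This yields, for every $a<b$,
$$\Phi(b)=\Phi(a),\qquad \Phi(t):=F\big(g(t)\big)+\tfrac12\int_0^{\infty}x^\alpha\big(u_y^2-u_x^2\big)(x,t)\,dx,$$
where $F'=f$; that is, $\Phi$ is constant in $t$ — the Hamiltonian we are after.

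Several uniform bounds are needed to justify the limiting procedure, and they all come from the regularity theory in \cite{CS} (see also \cite{cafS}). First, an interior gradient estimate $|\nabla u(x,y)|\le C/x$ for $x\ge1$, uniform in $y$, obtained by rescaling on balls of radius $x/2$: this forces $\int_a^b P(X,y)\,dy\to0$ and controls the outer tail, $\int_1^\infty x^\alpha|\nabla u|^2\,dx\le C\int_1^\infty x^{\alpha-2}\,dx<\infty$. Second, boundary regularity near $\{x=0\}$: since $v=g\in C^2(\R)\cap L^\infty(\R)$, the functions $u_y=P*g'$ and $u_{yy}=P*g''$ are bounded, and from $\partial_x(x^\alpha u_x)=-x^\alpha u_{yy}$ one gets that $x^\alpha u_x$ is bounded on $\{0<x\le1\}$ uniformly in $y$; hence $|P|+|Q|\le C(x^\alpha+x^{-\alpha})$ there, which is integrable because $|\alpha|<1$. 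Together these give a $t$–independent integrable majorant for $Q(\cdot,t)$ on $(0,\infty)$.

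Finally I would pass to the limit $t\to\pm\infty$ in $\Phi(t)\equiv{\rm const}$. Since $g$ is monotone with $g(\pm\infty)=\alpha^{\pm}$ and, by Theorem \ref{CSPoisson}, $u$ is the Poisson extension of $g$, the $y$–translates $u(\cdot,\cdot+t)$ are monotone in $y$ and, by compactness together with uniqueness of the bounded extension with the constant boundary datum $\alpha^{\pm}$, converge in $C^1_{loc}(\overline{\R^2_+})$ to the constant $\alpha^{\pm}$ as $t\to\pm\infty$; in particular $\nabla u(x,t)\to0$ for each fixed $x$. Combined with the dominated convergence made available by the previous step, $\int_0^\infty x^\alpha(u_y^2-u_x^2)(x,t)\,dx\to0$, so $\Phi(\pm\infty)=F(\alpha^{\pm})$, whence $F(\alpha^+)=F(\alpha^-)$ and, since $G$ and $-F$ have the same derivative, $G(\alpha^+)=G(\alpha^-)$. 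The hard part is entirely in the regularity and decay estimates near the degenerate boundary $\{x=0\}$ and at $x=\infty$ — i.e. making the formal computation with $(P,Q)$ rigorous for a merely weak solution — together with the (routine but careful) identification of the limiting one–dimensional profiles; the algebraic identity $\partial_x P+\partial_y Q=0$ itself is immediate.
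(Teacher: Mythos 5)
Your argument is correct and is essentially the proof the paper delegates to \cite{CS}: the paper gives no proof of Theorem \ref{hamil} itself, invoking instead the Hamiltonian estimate of \cite{CS}, which is exactly the conserved quantity $\Phi(t)=F\big(u_0(0,t)\big)+\tfrac12\int_0^\infty x^\alpha\big(u_y^2-u_x^2\big)(x,t)\,dx$ that you construct from the divergence-free pair $(P,Q)$ and evaluate at $y=\pm\infty$. The only step to tighten is where you bound $u_y$ and $u_{yy}$ near $\{x=0\}$ ``since $g\in C^2(\R)\cap L^\infty(\R)$'': boundedness of $g'$ and $g''$ does not follow from that assumption alone, but it does follow from the uniform Schauder-type boundary estimates for bounded solutions of \eqref{1D} with $f\in C^{1,\beta}$ established in \cite{CS}, which is what should be invoked there (and which also justifies the $C^1_{loc}(\overline{\R^2_+})$ compactness of the translates used in your final limit).
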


From Theorem \ref{hamil}, we deduce that the nonlinearity has to be balanced, i.e. 
$$\int_{\alpha^-}^{\alpha^+}f(x)\,dx=0,$$
hence a contradiction with $f \geq 0$, unless $f\equiv0$ on the range of $(\a^-,\a^+)$. 
Hence $u_0$ is actually a bounded weak solution of    
\begin{equation}
\left \{
\begin{matrix} 
{\rm div}\, (x^\a \nabla u_0)=0 \qquad 
{\mbox{ on $\R^{2}_+ $}} 
\\
-x^\a \partial_x u_0 = 0. 
\qquad{\mbox{ on $\partial\R^{2}_+ $.}}\end{matrix}\right . \end{equation}
Since $u_0$ has zero conormal  derivative on the boundary, one can reflect it oddly to obtain a new function (still denoted $u_0$) satisfying weakly div$(|x|^\alpha \nabla u_0)=0$ in $\R^2$. Applying Proposition 2.6 in \cite{CSS} and using the fact that $u_0$ is bounded, one gets that $u_0$ has the form 
$$u_0(x,y)=c_1(y)x^{1-\alpha}+c_2(y)$$
for some functions $c_1,c_2: \R \to \R$. Since $u_0$ is bounded, this gives $c_1 \equiv 0$. Therefore, the function $u_0$, which depends only on $y \in \R$ satisfies $u_0 ''=0$, giving that $u_0$ is constant. 
\begin{remark}
In the range $\alpha \in (-1,0)$, one can give a shorter proof using directly Theorem \ref{liouville}.  Indeed, applying this theorem to $u_0$ and taking $\varphi \equiv 1$, one just needs to check the energy bound \eqref{intR2},  

$$\int_{B_R^+} x^\alpha u_0^2 \leq C \int_0^R x^\alpha dx \int_{-R}^R dy = 2 C R^{1+\alpha} R \leq C_* R^2$$
for $R>1$, hence the result that $u_0$ is constant. 
\end{remark}

\section*{Acknowledgments}
 The first author would like to thank the hospitality of Laboratoire Poncelet where part of this paper has been done. The second author is supported by the ANR project "PREFERED".   
 
\bibliographystyle{alpha} 
\bibliography{bibliofile}

\begin{thebibliography}{CPSC94}

\bibitem[ABS98]{ABS}
Giovanni Alberti, Guy Bouchitt{\'e}, and Pierre Seppecher.
\newblock Phase transition with the line-tension effect.
\newblock {\em Arch. Rational Mech. Anal.}, 144(1):1--46, 1998.

\bibitem[BCN97]{BCN}
Henri Berestycki, Luis Caffarelli, and Louis Nirenberg.
\newblock Further qualitative properties for elliptic equations in unbounded
  domains.
\newblock {\em Ann. Scuola Norm. Sup. Pisa Cl. Sci. (4)}, 25(1-2):69--94
  (1998), 1997.
\newblock Dedicated to Ennio De Giorgi.

\bibitem[Ber96]{B}
Jean Bertoin.
\newblock {\em L\'evy processes}, volume 121 of {\em Cambridge Tracts in
  Mathematics}.
\newblock Cambridge University Press, Cambridge, 1996.

\bibitem[CCFS98]{POW}
M.~Chipot, M.~Chleb{\'{\i}}k, M.~Fila, and I.~Shafrir.
\newblock Existence of positive solutions of a semilinear elliptic equation in
  {${\mathbb R}\sp n\sb {+}$} with a nonlinear boundary condition.
\newblock {\em J. Math. Anal. Appl.}, 223(2):429--471, 1998.

\bibitem[CPSC94]{CPSC}
Valeria Chiad{\`o}~Piat and Francesco Serra~Cassano.
\newblock Relaxation of degenerate variational integrals.
\newblock {\em Nonlinear Anal.}, 22(4):409--424, 1994.

\bibitem[CRS09]{CRS}
Luis Caffarelli, Jean-Michel Roquejoffre, and Yannick Sire.
\newblock Variational problems with free boundaries for the fractional
  laplacian.
\newblock {\em Manuscript}, 2009.

\bibitem[CS07]{cafS}
Luis Caffarelli and Luis Silvestre.
\newblock An extension problem related to the fractional {L}aplacian.
\newblock {\em Commun. in PDE}, 32(8):1245, 2007.

\bibitem[CS09]{CS}
Xavier Cabr\'e and Yannick Sire.
\newblock {\em Manuscript}, 2009.

\bibitem[CSM05]{CSM}
Xavier Cabr{\'e} and Joan Sol{\`a}-Morales.
\newblock Layer solutions in a half-space for boundary reactions.
\newblock {\em Comm. Pure Appl. Math.}, 58(12):1678--1732, 2005.

\bibitem[CSS08]{CSS}
Luis Caffarelli, Sandro Salsa, and Luis Silvestre.
\newblock {R}egularity estimates for the solution and the free boundary of the
  obstacle problem for the fractional {L}aplacian.
\newblock {\em To appear in Invent. Math.}, 2008.

\bibitem[CT04]{CT}
Rama Cont and Peter Tankov.
\newblock {\em Financial modelling with jump processes}.
\newblock Chapman \& Hall/CRC Financial Mathematics Series. Chapman \&
  Hall/CRC, Boca Raton, FL, 2004.

\bibitem[CV09]{CV}
Luis Caffarelli and Alexis Vasseur.
\newblock Drift diffusion equations with fractional diffusion and the
  quasi-geostrophic equation.
\newblock {\em To appear in Ann. of Maths}, 2009.

\bibitem[DF09]{DF}
Louis Dupaigne and Alberto Farina.
\newblock Stable solutions of ${-\Delta u =f(u)}$ in {${\mathbb R}\sp N$}.
\newblock {\em Manuscript}, 2009.

\bibitem[DL76]{DL}
G.~Duvaut and J.-L. Lions.
\newblock {\em Inequalities in mechanics and physics}.
\newblock Springer-Verlag, Berlin, 1976.
\newblock Translated from the French by C. W. John, Grundlehren der
  Mathematischen Wissenschaften, 219.

\bibitem[Lan72]{landkof}
N.~S. Landkof.
\newblock {\em Foundations of modern potential theory}.
\newblock Springer-Verlag, New York, 1972.
\newblock Translated from the Russian by A. P. Doohovskoy, Die Grundlehren der
  mathematischen Wissenschaften, Band 180.

\bibitem[Ste70]{stein}
Elias~M. Stein.
\newblock {\em Singular integrals and differentiability properties of
  functions}.
\newblock Princeton Mathematical Series, No. 30. Princeton University Press,
  Princeton, N.J., 1970.

\bibitem[SV09]{SV}
Yannick Sire and Enrico Valdinoci.
\newblock Fractional {L}aplacian phase transitions and boundary reactions: {A}
  geometric inequality and a symmetry result.
\newblock {\em J. Funct. Anal.}, 256(6):1842--1864, 2009.

\end{thebibliography}

\bigskip\bigskip\bigskip\bigskip

 {\em LD} -- 
LAMFA, UMR CNRS 6140, Universit\'e Picardie Jules Verne, 33 rue St Leu, 80039 Amiens, France.  

{\tt ldupaigne@math.cnrs.fr}
\medskip 

{\em YS} --  
Universit\'e Aix-Marseille 3, Paul C\'ezanne -- 
LATP and Laboratoire Poncelet, Moscow, Russia
Marseille, France. 
 
{\tt sire@cmi.univ-mrs.fr}

\end{document}